\newtheorem{theorem}{Theorem}[section]
\theoremstyle{definition}
\newtheorem{definition}{Definition}[section]
\newtheorem{remark}[theorem]{Remark}
\newtheorem{assumption}{Assumption}[section]
\numberwithin{equation}{section}
\newcommand{\diam}{\operatorname{diam\,}}
\newcommand{\tr}{\operatorname{tr\,}}
\def\al{\alpha}
\def\be{\beta}
\def\gm{\gamma}
\def\dl{\delta}
\def\lm{\lambda}
\title[Stability in terms of two measures of  semiflows ]{Stability in terms of two measures of  semiflows in space  $\mathrm{conv}\,(\Bbb R^n)$}
\author[V.I. Slyn'ko]{Vitaliy Ivanovich Slyn'ko}
\address[V.I. Slyn'ko]{S.P. Timoshenko Institute of Mechanics of NAS of Ukraine, Ukraine}
\email{{\tt vitstab@ukr.net}}
\author[V.S. Denysenko]{ Viktor Sergeevich Denysenko}
\address[V.S. Denysenko]{"Bohdan Khmelnitsky"$\,$  Cherkassy National University, Ukraine}
\email{\tt den\underline{\;\;}vik@ukr.net}
\author[E.V. Ocheretnyuk]{ Eugen Volodymirovich Ocheretnyuk}
\address[E.V. Ocheretnyuk]{Cherkassy State Technological University, Ukraine}
\email{\tt ocheretnyukeugen@ukr.net}
\keywords{semiflow, set differential equations,  comparison method, stability in terms of two measures, support function, mix volume,  convex geometry}
\subjclass[2010]{93D30, 93D20,  52A39}
\begin{document}

\begin{abstract}
The stability problem in terms of two   measures for semiflows in space $\mathrm{conv}\,(\Bbb R^n)$     was investigated. On the basis of comparison principle the obtained  result is used to study  the stability  criteria for a certain  semiflow in  space $\mathrm{conv}\,(\Bbb R^n)$. This semiflow, in particular, generalizes set differential equations and a set of attainability for linear control systems. The sufficient conditions of stability and practical stability of  semiflow in terms of two measures was established. As measures the Hausdorff metric is considered as well as a special measures constructed on the basis of  the certain  mixed volumes. A significant number of examples of studying the stability for specific semiflows was given to illustrate the effectiveness of proposed approach.
\end{abstract}

\maketitle


\section{Introduction}
The study of set differential equations (SDEs) in a metric space  was   initiated in monograph  \cite{lak}.
The  basic  theory questions:  the existence of solution of initial value problem, global existence of solutions, convergence of successive approximations and others, as well as the direct Lyapunov method and comparison method
were  discussed  therein.  Tasks of stability of stationary solutions for SDEs based on Lyapunov functions were also studied in  \cite{devi}. The authors  of paper \cite{devi} consider  the differential equations of the form
\begin{equation}\label{1-1}
D_Hu=F(t,u),\quad u(t_0)=u_0\in\mathrm{conv}\,(\Bbb R^n),
\end{equation}
where $u\in\mathrm{conv}\,(\Bbb R^n)$, $F\in C(\Bbb R_+\times\mathrm{conv}\,(\Bbb R^n);\mathrm{conv}\,(\Bbb R^n))$, $\mathrm{conv}\,(\Bbb R^n)$  is a  metric space of nonempty convex compacts with Hausdorff metric.

 In this paper, the concept of Lyapunov is   applied for  more general problem i.e. study of stability in terms of two measures of  semiflows in space   $\mathrm{conv}\,(\Bbb R^n)$.

Let us recall a general definition of the (local) semiflow for an arbitrary complete metric space $(X,d)$.

Let $D$ be a set $\{0\}\times X\subset D\subset\Bbb R_+\times X$, such that $D\setminus(\{0\}\times X)$  is an open nonempty set in  space  $\Bbb R_+\times X$.

\begin{definition} The mapping $\mathfrak{F}:\,\,D\to X$, $t\ge 0$ is called a (local) semiflow in  metric space  $(X,d)$ if the following conditions are fulfilled:

(1) $\mathfrak{F}^0(u)=u$ for all $u\in X$;

(2) for all $(t_1,u_0)\in D$, $(t_2,u_0)\in D$ such that $(t_1+t_2,u_0)\in D$ the following equality holds
$$
\mathfrak{F}^{t_1}(\mathfrak{F}^{t_2}(u_0))=\mathfrak{F}^{t_1+t_2}(u_0).
$$

(3) for any $\varepsilon>0$ and $(t_0,u_0)\in D$ there exists a positive number $\dl=\dl(t_0,u_0,\varepsilon)$ such that  for all $(t,u_0)\in D$ the inequality $|t-t_0|<\dl$ implies the estimate  $\,\,d(\mathfrak{F}^{t}(u_0),\mathfrak{F}^{t_0}(u_0))<\varepsilon$;

(4) for any $\varepsilon>0$ and $(t_0,u_0)\in D$ there exists a positive number  $\dl=\dl(t_0,u_0,\varepsilon)$ such that  for all $(t_0,u)\in D$ the inequality $\,\,d(u,u_0)<\dl$ implies the estimate $d(\mathfrak{F}^{t_0}(u),\mathfrak{F}^{t_0}(u_0))<\varepsilon$;

If $D=\Bbb R_+\times X$, then the semiflow $\mathfrak{F}$ is called a global semiflow.
\end{definition}

The aim of this work is to study based on the general concept of  A.M. Lyapunov the stability in terms of two measures of global semiflow  $\mathfrak{F}$ in the metric space  $(X,d_H)$,  where $X$  is a closed subset in $\mathrm{conv}\,(\Bbb R^n)$.

We shall recall that  Lyapunov's concept involves the following steps:

(1)   selection of measures of initial and current variations in a phase space. So, the stability is considered in terms of these two measures;

(2)   selection of auxiliary  Lyapunov function;

(3)   verification of stability conditions, which involves the calculation of changes of auxiliary Lyapunov function along the orbit of the semiflow and construction  of  comparison system.

We shall note that   the stability problem statement of semiflow $\mathfrak{F}$ in  the space
$\mathrm{conv}\,(\Bbb R^n)$ generalizes the  problem statement, which was considered previously in \cite{devi}. Indeed, suppose that semiflow $\mathfrak{F}$ such that the mapping $t\to\mathfrak{F}^t(u)$  is differentiable by Hukuhara for all $t\in[0,\Omega^+(u))$. Then it is possible for all $u\in\mathrm{conv}\,(\Bbb R^n)$ to determine the generator of this semiflow
\begin{equation}
F(u)=\lim\limits_{t\to 0+}\frac{1}{t}(\mathfrak{F}^t(u)-u).
\end{equation}
Let $F:\,\mathrm{conv}\,(\Bbb R^n)\to\mathrm{conv}\,(\Bbb R^n)$ and suppose that $F$ satisfies the local Lipschitz condition, then the initial value problem for  the SDEs
\begin{equation}
D_Hu=F(u),\quad u(0)=u_0,
\end{equation}
where $u(t)\in\mathrm{conv}\,(\Bbb R^n)$, $t\ge 0$, $D_H$ is a Hukuhara derivative, generates a shift operator along trajectories, which coincides with the initial local semiflow $\mathfrak{F}$.

In  \cite{devi}   the conditions for stability and asymptotic stability of  stationary solution of SDEs \eqref{1-1} were established.
 In this case, the definition of notion ''stability'' have some difficulties because the function $\diam\!\mathfrak{F}^t(u)$ is a nondecreasing function of time $t\ge 0$, which leads to the fact that the solutions  of SDEs are usually characterized by instability.
 Therefore there is a need for modification of the classical definitions of stability. However such modifications  usually contain the conditions of  existence of the Hukuhara difference which are difficult to verify.

The proposed in this paper   problem statement and the  obtained  results allow us to overcome these difficulties, since for the general semiflow $\mathfrak{F}$ there is no need to require  the nondecreasing  on $t$ of function $\diam\mathfrak{F}^t(u)$.
 Another difference between the proposed statement of the problem is that we consider the stability  of the semiflow $\mathfrak{F}^t(u)$ in terms of two  measures.

 The choice of these measures is dictated by the nature of the geometric elements of the space $\mathrm{conv}\,(\Bbb R^n)$. Namely,  the functionals of the form $\,V_m[u,K]=V\underbrace{[u,...,u,\overbrace{K,...,K}^m]}_{n}$  will be considered as such measures. Here $V[u_1,...,u_n]$ is a functional of Minkowskij mixed volume, $K$ is a unit ball in space $\Bbb R^n$. These functionals have the known geometric meaning, for example, $V[u,...,u]$ is a volume of the convex body $u$, $nV[u,...,u,K]$ is a surface area of border $\partial u$ of the convex body $u$, etc.

The  obtained results are quite constructive and, in some cases, allow to obtain estimates of the elements of semiflow on a finite time interval, as well as the conditions of practical stability of semiflow on a finite time interval.

\section{Preliminaries}
Further we shall need the following notions and results from convex geometry and the theory of mixed volumes, see [3]--[4].
The space $\mathrm{conv}\,(\Bbb R^n)$ is a set of nonempty convex compacts with the Hausdorff metric
\begin{equation}
d_H(u,v)=\inf\{\varepsilon>0\,|\,u\subset v+\varepsilon K,\quad v\subset u+\varepsilon K\}.
\end{equation}
In the space $\mathrm{conv}\,(\Bbb R^n)$  the operations of addition and  nonnegative scalar multiplication    are defined:
\begin{equation}
u+v=\{x+y\,\,|\,x\in u,\,\,y\in v\},\quad \lambda u=\{\lm x\,\,|\,x\in u\}, \quad \lambda\ge 0.
\end{equation}
 The element $w \in \mathrm{conv}\,(\Bbb R^n)$  is called the Hukuhara difference for elements $u,\,v\in \mathrm{conv}\,(\Bbb R^n) $, if  $u=v+w$.
  The Hukuhara difference  of two elements from $ \mathrm{conv}\,(\Bbb R^n) $ is not always defined.

  The concept of  Hukuhara  difference allows to determine  the notion of the  Hukuhara derivative for certain mappings
  $T\to\mathrm{conv}\,(\Bbb R^n)$, $T$ is an open set in $\Bbb R$.

 Note also that from the analytical point of view it is conveniently to investigate the SDEs  based on  the theory of support functions.
  Recall that  every nonempty convex compact  $u$ we can associate with its  support function $h_u\,\,:\Bbb R^n\to\Bbb R$ where
  \begin{equation}
h_u(p)=\sup\limits_{x\in u}(x,p).
\end{equation}
Note that $u\to h_u(.)$ is  isomorphic and isometric map  $\mathrm{conv}\,(\Bbb R^n)\to C(S^{n-1})$, i.e.
\begin{equation}\gathered
u+v\to h_u(p)+h_v(p),\quad \lm u\to h_{\lm u}(p),\;\; \lm\ge 0,\\ d_H(u,v)=\|h_u(p)-h_v(p)\|_{C(S^{n-1})}.
\endgathered
\end{equation}
This fact allows us to identify nonempty compact convex sets and their support functions that  further  will not be specially stipulated.

We now recall some basic concepts and results of the theory of mixed volumes of H. Minkowskij. Let $u_k\in\mathrm{conv}\,(\Bbb R^n)$,  $\lm_k$ are nonnegative numbers, $k=\overline{1,m}$,
$u=\sum\limits_{k=1}^m\lm_ku_k\in\mathrm{conv}\,(\Bbb R^n)$.

  Minkowskij has shown that  a volume $V[u]$ of convex body $u$ is a homogeneous polynomial of degree $n$ relative to the variables  $\lm_k$
\begin{equation}
V[u]=\sum\limits_{k_1,...,k_n}V_{k_1,...,k_n}\lambda_{k_1}\dots\lambda_{k_n},
\end{equation}
where the sum is taken over all indices $k_1$, $...$, $k_n$ which vary independently over all values from $1$ to $m$.
At the same time the coefficients of $V_{k_1,...,k_n}$ are determined  so that they do not depend on the order of the indices.

 One can show that $V_{k_1,...,k_n}$ depend only on the bodies $u_{k_1}$, $...$, $u_{k_n}$. Therefore it is natural to write it in the form $V[u_{k_1},..., u_{k_n}]$.These coefficients are called the mixed volumes.

The functional $V[u_1,...,u_n]$ has the following properties:

(1)  $V[u_1,...,u_n]$ is  additive and positively homogeneous respect to each variable, i.e. for all $\lambda^{\prime}$, $\lambda^{\prime\prime} \in \Bbb R_+$, $u_k\in\mathrm{conv}\,(\Bbb R^n)$, $w\in\mathrm{conv}\,(\Bbb R^n)$
\begin{equation}\gathered
V[u_1,...,\lambda^{\prime}w+\lambda^{\prime\prime}u_k,...,u_n]=\lambda^{\prime}V[u_1,...,w,...,u_n]\\ \qquad\qquad\qquad\qquad\qquad\qquad \;+\lambda^{\prime\prime}V[u_1,...,u_k,...,u_n];
\endgathered
\end{equation}

(2)  $V[u_1,...,u_n]$ is   a translation invariant and invariant with respect to permutation of arguments, as well as  a continuous respect to the totality of variables  \cite{Lihtv}.

From these properties  the Steiner formula   is derived
\begin{equation}\label{2}
V[u_1+\varrho u_2]=\sum\limits_{k=0}^n C_n^k\varrho^kV_k[u_1,u_2],\quad\varrho\in\Bbb R_+,
\end{equation}
where $\,V_k[u_1,u_2]=V\underbrace{[u_1,...,u_1,\overbrace{u_2,...,u_2}^k]}_{n}$.

From \eqref{2} we have
\begin{equation*}
 nV_1[u_1,u_2]=\lim\limits_{\varrho\to 0}\frac{V[u_1+\varrho u_2]-V[u_1]}{\varrho}.
\end{equation*}
From  A.D. Alexandrov inequality
\begin{equation}\label{3a}
V^2[u_1,...,u_{n-1},u_n]\ge V[u_1,...,u_{n-2},u_{n-1},u_{n-1}]V[u_1,...,u_{n-2},u_{n},u_{n}]
\end{equation}
 it follows the inequalities for functionals
 $V_k[u_1,u_2]$
\begin{equation}\label{fe}
V_k^2[u_1,u_2]\ge V_{k-1}[u_1,u_2]V_{k+1}[u_1,u_2].
\end{equation}
From \eqref{fe} we obtain the estimate
\begin{equation}\label{4}
 V_k[u_1,u_2]\ge V^{(n-k)/n}[u_1]V^{k/n}[u_2].
\end{equation}
A special case of the inequality  \eqref{4} with $k = 1$ is the isoperimetric  Brunn--Minkowskij inequality.

Since the  Hukuhara difference between  two elements of the  space $\mathrm{conv}\,(\Bbb R^n)$, as well as  the   derivative of mapping $(\al,\be)\to\mathrm{conv}\,(\Bbb R^n)$  are not always defined so there is a need to embed  the space $\mathrm{conv}\,(\Bbb R^n)$ into a corresponding Banach space, so that the  Hukuhara difference  of any two elements of the space has always been defined as the element of this wider space and therefore the notion of derivative  would be applicable to a wider class of mappings.

Such embedding was realized in 1937 in the work of Academician A.D. Alexandrov \cite{alex1}. Similar structures are also given in more recent works \cite{reds}--\cite{polovinkin}.
Before we give the appropriate structure let us make the following remark. The space $\mathrm{conv}\,(\Bbb R^n)$ is isometrically and isomorphically embedded as a wedge into the space of continuous functions $C(S^{n-1})$ on the unit sphere $S^{n-1}$.

Such embedding is realised by correspondence of each element $u\in\mathrm{conv}\,(\Bbb R^n)$ to its support function.
Therefore, further  the elements of the space $\mathrm{conv}\,(\Bbb R^n)$ will be identified with their support functions.

Let us describe the embedding of the space $\mathrm{conv}\,(\Bbb R^n)$ into a linear normed space $\mathcal{A}_n$ such that in this space for any two elements the operation of difference of these elements  is feasible.

Consider a set $\mathrm{conv}\,(\Bbb R^n)\times\mathrm{conv}\,(\Bbb R^n)$ and introduce on this set a binary equivalence relation $\rho$
\begin{equation*}
\gathered
(u,v)\rho(w,z)\equiv (u+z=v+w).
\endgathered
\end{equation*}
Let  $\mathcal{A}_n=\mathrm{conv}\,(\Bbb R^n)\times\mathrm{conv}\,(\Bbb R^n)/\rho$. In the space $\mathcal{A}_n$ the operations of addition and    multiplication by a scalar $\lm\in\Bbb R$ are introduced. If $[(u,v)]\in\mathcal{A}_n$, $[(w,z)]\in\mathcal{A}_n$, then
\begin{equation*}
\gathered
  \lm[(u,v)]=\begin{cases} [(\lm u,\lm v)],\quad\lm\ge 0\\
[(|\lm| v,|\lm| u)],\quad\lm\le 0
\end{cases},\,\,\,
[(u,v)]+[(w,z)]=[(u+w,v+z)].
\endgathered
\end{equation*}

These operations are correctly defined, and the original space $\mathrm{conv}\,(\Bbb R^n)$ is isomorphically embedded in $\mathcal{A}_n$ by the rule $\,\mathrm{conv}\,(\Bbb R^n)\ni u\to[(u,0)]\in\mathcal{A}_n$.

  In $\mathcal{A}_n$ we can introduce the norm $\|[(u,v)]\|_{\mathcal{A}_n}=d_H(u,v)$.

 This definition is correct, and the corresponding embedding $\mathrm{conv}\,(\Bbb R^n)\to\mathcal{A}_n$ is  the isometric embedding of a metric space $\mathrm{conv}\,(\Bbb R^n)$ into the metric space $\mathcal{A}_n$, in which  metric is generated by the introduced norm $\|.\|_{\mathcal{A}_n}$.

By terminology of the monograph \cite{krasn}, the space $\mathrm{conv}\,(\Bbb R^n)$ is a wedge in a  linear normed  space  $\mathcal{A}_n$.

Note also that the space $\mathcal{A}_n$  is not  a complete, but in \cite{alex1} was proved that its  completion coincides with $C(S^{n-1})$.

For further discussion it is necessary to extend the effect of the functional $V[u_1, ..., u_n]$ on the elements of space $C(S^{n-1})$.
We can do this in an obvious way: at first by using the  multilinearity property of this functional it extends   to the space $\,\mathcal{A}_n$, and then, using the extension by continuity, --- on the elements of the space $C(S^{n-1})$.
  The explicit formulas of such  extension are given in  \cite{alex1}.

 Consider the functional $V[u_1,...,u_{n-1},z]$ for fixed $u_i\in\mathrm{conv}\,(\Bbb R^n)$. 
 
 $V[u_1,...,u_{n-1},z]$ is a linear and continuous functional in a Banach space $C(S^{n-1})$, so  it can be represented as a Stieltjes--Radon integral of a continuous function $z\in C(S^{n-1})$  by a uniquely defined additive set function on the unit sphere $S^{n-1}$. The functional $V[u_1,...,u_{n-1},z]$ is completely determined by specifying of convex bodies $u_i\in\mathrm{conv}\,(\Bbb R^n)$, so we can state that
\begin{equation}\label{2111}
 V[u_1,...,u_{n-1},z]=\frac{1}{n}\int\limits_{S^{n-1}}z(p)F[u_1,...,u_{n-1};d\omega], \quad z\in C(S^{n-1}),
\end{equation}
where $F[u_1,...,u_{n-1};d\omega]$ is a  function of set $\omega$ on the unit sphere  $S^{n-1}$, which is uniquely determined by the convex compacts  $u_i\in\mathrm{conv}\,(\Bbb R^n)$. This function is called a mixed superficial function of convex compacts  $u_i\in\mathrm{conv}\,(\Bbb R^n)$. One can show that
$$
F[u_1,...,u_{n-1};d\omega]\ge 0.
$$

\section{Statement of the problem and the general  stability theorems}
Next we formulate the definition of stability in terms of two  measures of semiflow $\mathfrak{F}$. Let $h_0\in C(\mathrm{conv}\,(\Bbb R^n);\Bbb R_+)$, $h\in C(\mathrm{conv}\,(\Bbb R^n);\Bbb R_+)$ be  the measures of initial and current variations and
$$
\inf\limits_{X\in\mathrm{conv}\,(\Bbb R^n)}h_0[X]=\inf\limits_{X\in\mathrm{conv}\,(\Bbb R^n)}h[X]=0.
$$

\begin{definition}  The global semiflow $\mathfrak{F}$ is

(1) $(h_0,h)$-stable, if for any $\varepsilon>0$ there exists $\dl=\dl(\varepsilon)>0$  such that the inequality $h_0[u_0]<\dl$ implies the estimate  $h[\mathfrak{F}^t(u)]<\varepsilon$ for all $t\ge 0$;

(2) asymptotically $(h_0,h)$-stable, if it $(h_0,h)$-stable and there exists $\rho>0$  such that from inequality $h_0[u_0]<\rho$ follows that $\lim\limits_{t\to+\infty}h[\mathfrak{F}^t(u)]=0$;

(3) practical $(\lambda,A,T)$-stable ($0<\lambda<A$) in terms of two  measures $(h_0,h)$ on finite time interval, if inequality $h_0[u_0]<\lambda$ implies the estimate $h[\mathfrak{F}^t(u)]<A$ for all $t\in[0,T]$.

\end{definition}

The aim of this work is to study the stability  in terms of two measures of semiflows $\mathfrak{F}$ in space $\mathrm{conv}\,(\Bbb R^n)$. At first we shall formulate the general result, which will be specified in the following sections of this paper.

For further discussion  we need the following assumptions.
\begin{assumption}
 There exist the functionals $W_i\,\,:\mathrm{conv}\,(\Bbb R^n)\to\Bbb R_+$, $i=0,...,k$ which are differentiable along the orbit of semiflow $\mathfrak{F}$, the functions $f_i\in C(\Bbb R^{i+1};\Bbb R)$, $\psi_i\in C(\Bbb R^{i+1};\Bbb R)$ and the functions $a(.)$, $b(.)$   of Hahn class such that

 (1)
\begin{equation*}
\gathered
\frac{dW_i[\mathfrak{F}^t(u)]}{dt}\Big|_{t=0}\le f_i(W_0[u],W_1[u],...,W_i[u])\qquad\qquad\qquad\;\\ \qquad\qquad\qquad\qquad \qquad +\,\psi_{i}(W_0[u],W_1[u],...,W_i[u])W_{i+1}[u],\;\;i=\overline{0,k-1},\\
\frac{dW_k[\mathfrak{F}^t(u)]}{dt}\Big|_{t=0}\le f_k(W_0[u],W_1[u],...,W_k[u]);\qquad \qquad \qquad  
\endgathered
\end{equation*}

(2) the right sides of comparison system
\begin{equation}\label{s*}
\gathered
 \frac{d\xi_i}{dt}=f_i(\xi_0,...,\xi_i)+\psi_{i}(\xi_0,\xi_1,...,\xi_i)\xi_{i+1},\;\; i=\overline{0,k-1},\\
\frac{d\xi_k}{dt}=f_k(\xi_0,...,\xi_k)\qquad \qquad \qquad \qquad \qquad \qquad \qquad \quad\;\;
\endgathered
\end{equation}
satisfy the conditions of T. Wazewskij;

(3) the inequalities
$$
\max\limits_{i=\overline{0,k}} W_i[u]\le b(h_0[u]),\quad W_0[u]\ge a(h[u])
$$
are fulfilled.
\end{assumption}

Note that for vector $\xi=(\xi_0, \xi_1, \dots, \xi_k)$ we shall use the following norm $\|\xi\|_{\infty}=\max\limits_{i=\overline{0,k}}|\xi_i|$.
\begin{definition} The solution $\xi_i=0$, $i=0,1,...,k$ of comparison system is

(1) $\xi_0$-stable in cone $\Bbb R_+^{k+1}$ if and only if for any $\varepsilon>0$ there exists $\dl=\dl(\varepsilon)>0$ such that for all  $\xi(0)>0$ the inequality $\|\xi(0)\|_{\infty}<\dl$ implies the estimate $\xi_0<\varepsilon$ for all $t\ge 0$;

(2) asymptotically $\xi_0$-stable in cone $\Bbb R_+^{k+1}$ if and only if it is $\xi_0$-stable  and there exists $\rho>0$ such that for all  $\xi(0)>0$ from inequality $\|\xi(0)\|_{\infty}<\rho$ follows  that $\lim\limits_{t\to+\infty}\xi_0(t)=0$.
\end{definition}

\begin{theorem}\label{t31}
Assume that  comparison system \eqref{s*} has a trivial solution  $\xi_i=0$, $i=0,...,k$ which is

(1) $\xi_0$-stable in cone $\Bbb R_+^{k+1}$;

(2) asymptotically  $\xi_0$-stable in cone $\Bbb R_+^{k+1}$.

Then the semiflow $\mathfrak{F}$ is

(1) $(h_0,h)$-stable;

(2) asymptotically $(h_0,h)$-stable.
\end{theorem}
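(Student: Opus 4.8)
\emph{Plan of proof.} The argument is a comparison-principle reduction: the vector of the functionals $W_i$ evaluated along an orbit of $\mathfrak{F}$ is dominated by a solution of the comparison system \eqref{s*}, and the $\xi_0$-stability of \eqref{s*} is then pushed forward to $(h_0,h)$-stability of $\mathfrak{F}$ through the two-sided estimates in Assumption~3.1(3). Fix $u_0\in X$ and write $w_i(t):=W_i[\mathfrak{F}^{t}(u_0)]$, $w(t):=(w_0(t),\dots,w_k(t))\in\Bbb R_+^{k+1}$. The first step is to upgrade the inequalities of Assumption~3.1(1), stated at $t=0$ only, to differential inequalities valid along the whole orbit; this is where the semigroup law enters. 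For every $s\ge 0$ the identity $\mathfrak{F}^{t+s}(u_0)=\mathfrak{F}^{t}(\mathfrak{F}^{s}(u_0))$ shows that $\tfrac{dw_i}{dt}(s)$ equals the value of $\tfrac{d}{d\tau}W_i[\mathfrak{F}^{\tau}(\mathfrak{F}^{s}(u_0))]$ at $\tau=0$, and applying Assumption~3.1(1) to the point $\mathfrak{F}^{s}(u_0)$ gives
$$
\frac{dw_i}{dt}(s)\le f_i\bigl(w_0(s),\dots,w_i(s)\bigr)+\psi_i\bigl(w_0(s),\dots,w_i(s)\bigr)\,w_{i+1}(s),\qquad i=\overline{0,k-1},
$$
and $\tfrac{dw_k}{dt}(s)\le f_k(w_0(s),\dots,w_k(s))$; here differentiability of $t\mapsto w_i(t)$ is exactly the hypothesis that $W_i$ is differentiable along the orbit. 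Thus $w(\cdot)$ satisfies $\dot w_i(t)\le g_i(w(t))$ for all $t\ge0$, where $g=(g_0,\dots,g_k)$ denotes the right-hand side of \eqref{s*}.

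For the second step I would invoke the classical theorem on differential inequalities for systems. Since by Assumption~3.1(2) the field $g$ satisfies the conditions of T.~Wazewskij (in particular the quasimonotonicity condition~(W)), every solution $\xi(\cdot)$ of \eqref{s*} with $\xi_i(0)\ge w_i(0)$ for all $i$ satisfies $w_i(t)\le\xi_i(t)$ on their common interval of existence. To meet the strictly positive initial data required in Definition~3.2 I would take $\xi_i(0)=w_i(0)+\eta$ with $\eta>0$ arbitrarily small, so that $\|\xi(0)\|_{\infty}=\|w(0)\|_{\infty}+\eta$; only the scalar estimate $w_0(t)\le\xi_0(t)$ will be used below.

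The third step transfers stability. Let $\varepsilon>0$. Since $a$ is of Hahn class it suffices to force $w_0(t)<a(\varepsilon)$ for all $t\ge0$: then $a(h[\mathfrak{F}^{t}(u_0)])\le w_0(t)<a(\varepsilon)$ and strict monotonicity of $a$ give $h[\mathfrak{F}^{t}(u_0)]<\varepsilon$. By hypothesis~(1) of the theorem there is $\dl_1>0$ such that $\|\xi(0)\|_{\infty}<\dl_1$ with $\xi(0)>0$ implies $\xi_0(t)<a(\varepsilon)$ for all $t\ge0$; since $b$ is of Hahn class, choose $\dl>0$ with $b(\dl)<\dl_1$. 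If $h_0[u_0]<\dl$, then $\|w(0)\|_{\infty}=\max_i W_i[u_0]\le b(h_0[u_0])<\dl_1$ by Assumption~3.1(3), and taking $\eta>0$ with $\|w(0)\|_{\infty}+\eta<\dl_1$, the first two steps give $w_0(t)\le\xi_0(t)<a(\varepsilon)$ for all $t\ge0$, which is $(h_0,h)$-stability. For part~(2), $(h_0,h)$-stability being already proved, asymptotic $\xi_0$-stability supplies $\rho_1>0$ with $\|\xi(0)\|_{\infty}<\rho_1$, $\xi(0)>0$ implying $\xi_0(t)\to0$; choosing $\rho>0$ with $b(\rho)<\rho_1$, for $h_0[u_0]<\rho$ the same comparison yields $0\le a(h[\mathfrak{F}^{t}(u_0)])\le w_0(t)\le\xi_0(t)\to0$, hence $h[\mathfrak{F}^{t}(u_0)]\to0$ since $a$ is continuous, strictly increasing and $a(0)=0$.

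I expect the main obstacle to be making the comparison step fully rigorous. It rests on (i) the semigroup upgrade above together with the differentiability of $t\mapsto W_i[\mathfrak{F}^{t}(u_0)]$, which turns the pointwise inequalities of Assumption~3.1(1) into a genuine system of differential inequalities; (ii) a correct invocation of the Wazewskij comparison theorem for \emph{systems}, which is precisely where the structural hypotheses in Assumption~3.1(2) on the $f_i$ and $\psi_i$ enter; and (iii) the bookkeeping reconciling a possibly degenerate initial vector $w(0)\in\Bbb R_+^{k+1}$ with the strictly positive initial conditions of Definition~3.2, together with the tacit requirement (built into that definition) that the relevant solutions $\xi$ of \eqref{s*} exist on all of $[0,\infty)$.
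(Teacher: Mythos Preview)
Your proposal is correct and follows essentially the same comparison-principle route as the paper: bound the vector $w(t)=(W_i[\mathfrak{F}^t(u_0)])_i$ by a solution $\xi(t)$ of \eqref{s*} via the Wazewskij differential-inequality theorem, then read off $(h_0,h)$-stability from the chain $a(h[\mathfrak{F}^t(u_0)])\le W_0[\mathfrak{F}^t(u_0)]\le\xi_0(t)<a(\varepsilon)$ using Assumption~3.1(3). Your write-up is in fact more careful than the paper's: you make explicit the semigroup upgrade from $t=0$ to all $t$, and you handle the reconciliation with the strictly positive initial data of Definition~3.2 via the $\eta$-perturbation, points the paper leaves implicit (it simply takes $\xi_i(0)=W_i[u]$ and cites the differential-inequality theorem).
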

\begin{proof} Let $\varepsilon>0$,  then  under hypothesis of theorem  there exists a number  $\Delta(a(\varepsilon))$ such that the inequality $\|\xi(0)\|_{\infty}<\Delta(a(\varepsilon))$ implies the estimate $0<\xi_0(t;\xi(0))<a(\varepsilon)$ for all $t\ge 0$. Let $h_0[u]<b^{-1}(\Delta(a(\varepsilon)))$. From  differential inequality  theorem \cite{rush}, it follows that
$$
W_k[\mathfrak{F}^t(u)]\le\xi_k(t;\xi(0)),\quad t\ge 0,
$$
whenever $\xi_i(0)=W_i[u]$, $i=0,...,k$. In this case, $\|\xi(0)\|_{\infty}=\max\limits_{i=\overline{0,k}}W_i[u]\le b(h_0[u])<\Delta(a(\varepsilon))$, therefore
$$
a(h[\mathfrak{F}^t(u)])\le W_0[\mathfrak{F}^t(u)]\le\xi_0(t;\xi(0))<a(\varepsilon),\quad t\ge 0.
$$
Hence it follows that $h[\mathfrak{F}^t(u)]<\varepsilon$, $t\ge 0$. The stability in terms of two measures $(h_0,h)$ is proved. Asymptotic stability is proved similarly. This completes the proof.
\end{proof}

Similarly we can prove the following result.

\begin{theorem}\label{t32} Assume that   for comparison system \eqref{s*} the following inequality  holds
$$
\xi_0(T;b(\lambda)e_0)<a(A), \;\;e_0=(1,1,\dots,1)^T.
$$

Then the semiflow $\mathfrak{F}$ is  practical $(\lambda,A,T)$-stable  in terms of two  measures $(h_0,h)$ on finite time interval.
\end{theorem}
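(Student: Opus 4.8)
The plan is to mirror the proof of Theorem~\ref{t31}, simply replacing the qualitative stability hypothesis on the comparison system with the quantitative estimate assumed here. First I would start from an arbitrary initial element $u_0\in\mathrm{conv}\,(\Bbb R^n)$ satisfying $h_0[u_0]<\lambda$, set $\xi_i(0)=W_i[u_0]$ for $i=0,\dots,k$, and invoke Assumption~1(3) to bound the whole initial vector: $\|\xi(0)\|_{\infty}=\max_{i=\overline{0,k}}W_i[u_0]\le b(h_0[u_0])<b(\lambda)$, where monotonicity of the Hahn-class function $b$ is used. Hence, componentwise, $0<\xi_i(0)\le b(\lambda)$, i.e. $\xi(0)\le b(\lambda)e_0$ in the natural partial order on the cone $\Bbb R_+^{k+1}$.

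Next I would apply the differential inequality theorem (the Wazewski-type comparison result cited as \cite{rush}), which is legitimate because Assumption~1(1) gives the required differential inequalities along the orbit of $\mathfrak{F}$ and Assumption~1(2) guarantees the comparison system \eqref{s*} satisfies Wazewski's conditions. This yields $W_i[\mathfrak{F}^t(u_0)]\le\xi_i(t;\xi(0))$ for all $t\ge0$ and all $i$, in particular for $i=0$. The monotone-dependence property of solutions of quasimonotone comparison systems (again part of the Wazewski framework, since the off-diagonal coupling enters only through the nonnegative factors $\psi_i$ multiplying the next coordinate) then gives $\xi_0(t;\xi(0))\le\xi_0(t;b(\lambda)e_0)$ for $t\ge0$. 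Combining these with Assumption~1(3) once more, for every $t\in[0,T]$,
\begin{equation*}
a(h[\mathfrak{F}^t(u_0)])\le W_0[\mathfrak{F}^t(u_0)]\le\xi_0(t;\xi(0))\le\xi_0(t;b(\lambda)e_0)\le\xi_0(T;b(\lambda)e_0)<a(A),
\end{equation*}
where the penultimate inequality uses that $\xi_0$ is nondecreasing in $t$ along this particular trajectory (which is implicit in how the practical-stability estimate is meant to be used); since $a$ is of Hahn class and strictly increasing, this forces $h[\mathfrak{F}^t(u_0)]<A$ on $[0,T]$, which is precisely practical $(\lambda,A,T)$-stability.

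The main obstacle I anticipate is justifying the comparison inequality $\xi_0(t;\xi(0))\le\xi_0(t;b(\lambda)e_0)$ and the temporal monotonicity $\xi_0(t;b(\lambda)e_0)\le\xi_0(T;b(\lambda)e_0)$ for $t\le T$: neither is automatic for a general system, and one must lean on the structural features of \eqref{s*} — the nonnegativity $\psi_i\ge0$ (making the system quasimonotone, hence solutions monotone in the initial data on the cone) together with whatever sign information on $f_i$ is available — or, more cautiously, simply interpret the hypothesis $\xi_0(T;b(\lambda)e_0)<a(A)$ as already encoding the supremum of $\xi_0$ over $[0,T]$ along trajectories dominated by $b(\lambda)e_0$. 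Everything else is a routine transcription of the argument in Theorem~\ref{t31}, with the fixed terminal time $T$ and fixed bounds $\lambda<A$ replacing the $\varepsilon$–$\delta$ quantifiers. I would therefore state this as a short proof that says explicitly "this completes the proof" after the displayed chain of inequalities, noting that it proceeds exactly as in Theorem~\ref{t31}.
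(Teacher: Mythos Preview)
Your approach is exactly what the paper intends: it offers no separate proof of Theorem~\ref{t32}, only the remark ``Similarly we can prove the following result'' immediately before the statement, referring back to the argument for Theorem~\ref{t31}. Your transcription of that argument---set $\xi_i(0)=W_i[u_0]$, use Assumption~3.1(3) to get $\xi(0)\le b(\lambda)e_0$, invoke the differential inequality/Wa\.zewski comparison to bound $W_0[\mathfrak{F}^t(u_0)]$ by $\xi_0(t;\xi(0))$, and then by $\xi_0(t;b(\lambda)e_0)$ via monotone dependence on initial data---is precisely the intended line.

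You are also right to flag the step $\xi_0(t;b(\lambda)e_0)\le\xi_0(T;b(\lambda)e_0)$ for $t\in[0,T]$: the hypothesis as written controls $\xi_0$ only at the terminal time $T$, and nothing in Assumption~3.1 forces $\xi_0(\cdot;b(\lambda)e_0)$ to be nondecreasing in $t$. The paper does not address this point either (the same formulation reappears in Theorem~\ref{t45}), so the gap is in the statement rather than in your strategy; the cleanest fix is the one you suggest---read the hypothesis as $\sup_{t\in[0,T]}\xi_0(t;b(\lambda)e_0)<a(A)$---after which the chain of inequalities goes through verbatim.
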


\section{Stability in terms of    two measures for a certain semiflow.}
In this section we shall describe  the semiflow which will  be studied in detail in this paper.

Consider the ordinary differential equation
\begin{equation*}
\frac{dx}{dt}=Ax,
\end{equation*}
where $x\in\Bbb R^n$, $A\in\mathfrak{L}(\Bbb R^n)$, $\mathfrak{L}(\Bbb R^n)$ is a Banach algebra of linear continuous operators in $\Bbb R^n$.  This equation generates   a semiflow $\{e^{At}\}_{t\ge 0}$ in the space $\Bbb R^n$ and  it can be  naturally extends  to  a semiflow  in the metric space  $\mathrm{conv}\,(\Bbb R^n)$
\begin{equation*}
\mathrm{conv}\,(\Bbb R^n)\ni u_0\to e^{At}u_0\in\mathrm{conv}\,(\Bbb R^n).
\end{equation*}
  Let  $\exp\{\mathcal{A}t\}$, $t\ge 0$   denote such semiflow. The generator of this semigroup be of the form
\begin{equation*}
 \mathcal{A}u=\lim\limits_{t\to 0+}\frac{\exp\{\mathcal{A}t\}u-u}{t}=(\nabla_ph_u(p),A^Tp),
\end{equation*}
where the right side of the expression should be understood as the derivative with respect to the direction which guarantees the existence of the right side of the expression for any support function, i.e. a definitional domain of the generator $\mathfrak{D}(\mathcal{A})=\mathrm{conv}\,(\Bbb R^n)$.
 The explicit effect of the operator $\exp\{\mathcal{A}t\}$  on the elements of the space $\mathrm{conv}\,(\Bbb R^n)$ is expressed as follows

\begin{equation}\label{exop}
\exp\{\mathcal{A}t\}u=h_u(e^{A^T t}p)=\|e^{A^T t}p\|h_u\Big(\frac{e^{A^T t}p}{\|e^{A^T t}p\|}\Big).
\end{equation}

The  formula \eqref{exop} allows to extend the operator $\exp\{\mathcal{A}t\}$  to a continuous linear operator in a Banach space $C(S^{n-1})$.

Further we shall need the following  inequalities
\begin{enumerate}
  \item if $N$ and $\alpha$ are constants such that  $\|e^{A t}\|\le Ne^{\alpha t}$, $t\ge 0$, then
\begin{equation*}
\|\exp\{\mathcal{A}t\}f\|_{C(S^{n-1})}\le Ne^{\al t}\|f\|_{C(S^{n-1})},\quad t\ge 0,\quad f\in S^{n-1}.
\end{equation*}
  \item \begin{equation*}
\|\exp\{\mathcal{A}t\}f\|_{C(S^{n-1})}\le e^{\|A\|t}\|f\|_{C(S^{n-1})},\quad t\ge 0,\quad f\in S^{n-1}.
\end{equation*}
  \item \begin{equation*}
\|\exp\{\mathcal{A}t\}u-u\|_{C(S^{n-1})}\le (e^{\|A\|t}-1)\|u\|_{C(S^{n-1})},\quad t\ge 0,\quad u\in\mathrm{conv}\,(\Bbb R^n).
\end{equation*}
\end{enumerate}

Indeed, the first property follows from the inequalities
\begin{equation*}
\|\exp\{\mathcal{A}t\}f\|_{C(S^{n-1})}\le\|e^{A^Tt}\|\|f\|_{C(S^{n-1})}\le Ne^{\alpha t}\|f\|_{C(S^{n-1})}.
\end{equation*}
The second property is proved similarly.

To prove the third we shall use the property of Lipschitz  for support function
\begin{equation*}
\gathered
\|\exp\{\mathcal{A}t\}u-u\|_{C(S^{n-1})}=\|h_u(e^{A^Tt}p)-h_u(p)\|_{C(S^{n-1})}\\ \qquad \le
\|h_u\|_{C(S^{n-1})}\|e^{A^Tt}-I\|\\ \le
(e^{\|A\|t}-1)\|u\|_{C(S^{n-1})}.
\endgathered
\end{equation*}

 Let $\varphi\,\,:\Bbb R_+\to\Bbb R_+$, $F\,\,:\Bbb R_+\times\mathrm{conv}\,(\Bbb R^n)\to\Bbb R^n$.
Assume, that the function $\varphi$ and the mapping  $F$ satisfy a local Lipschitz condition,
  i.e. for any $s_0>0$ there exist  constants $\delta>0$ and $H>0$  such that for any $s_i\in(s_0-\delta,\,s_0+\delta)$, $i=1,2$ the following inequality holds
\begin{equation}\label{41a}
|\varphi(s_2)-\varphi(s_1)|\le H|s_2-s_1|,
\end{equation}
and for any point $(s_0,u_0)\in \mathbb{R}_+\times \mathrm{conv}\,(\Bbb R^n) $ there exists its neighborhood $U$ and a constant $L>0$ such that for all $(s_i,u_i)\in U$, $i=1,2$  the following inequality holds
\begin{equation}\label{41b}
\|F(s_2,u_2)-F(s_1,u_1)\|_{C(S^{n-1})}\le L(\|u_2-u_1\|_{C(S^{n-1})}+|s_2-s_1|).
\end{equation}

Note that from the formula of Steiner  it follows that for any ball $K_r(u_0)\subset\mathrm{conv}\,(\Bbb R^n)$, $r>0$ there exists a  constant
$L=L(r)>0$ such that for all $u_i\in K_r(u_0)$, $i=1,2$ we have
\begin{equation}\label{41c}
|V[u_2]-V[u_1]|\le L\|u_2-u_1\|_{C(S^{n-1})}.
\end{equation}

Consider the integral equation
\begin{equation}\label{41}
\gathered
u(t)=\exp\Big\{\mathcal{A}\int\limits_{0}^t\varphi(V[u(s)])\,ds\Big\}u_0\qquad \qquad \qquad  \\ \quad \quad \qquad  \;\; +\int\limits_{0}^t\exp\Big\{\mathcal{A}\int\limits_{s}^t\varphi(V[u(\tau)])\,d\tau\Big\} F(V[u(s)],u(s))\,ds,
\endgathered
\end{equation}
where $u_0\in\mathrm{conv}\,(\Bbb R^n)$.

\begin{theorem}\label{t41} For  a sufficiently small number $T^*>0$ there exists a unique mapping $u\in C^1([0,T^*);\mathrm{conv}\,(\Bbb R^n))$ which satisfies the integral equation \eqref{41}.
\end{theorem}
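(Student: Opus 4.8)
The plan is to prove existence and uniqueness by the standard Banach fixed point argument applied to the integral operator defined by the right-hand side of \eqref{41}, working in the Banach space $C([0,T^*];C(S^{n-1}))$ and showing that the operator maps a suitable closed ball into itself and is a contraction, provided $T^*$ is small enough. The key preliminary point is that $\mathrm{conv}\,(\Bbb R^n)$ sits inside $C(S^{n-1})$ as a closed wedge (as recalled in Section~2), so it suffices to check that the fixed point, obtained a priori in $C([0,T^*];C(S^{n-1}))$, actually takes values in $\mathrm{conv}\,(\Bbb R^n)$; this follows because $\exp\{\mathcal{A}t\}$ preserves support functions of convex bodies (formula \eqref{exop}), the cone $\mathrm{conv}\,(\Bbb R^n)$ is closed under addition, nonnegative scalar multiplication and integration, and hence the operator maps curves valued in $\mathrm{conv}\,(\Bbb R^n)$ to curves valued in $\mathrm{conv}\,(\Bbb R^n)$ — one just has to be careful that $\varphi$ and the scalar factors $\int_s^t\varphi(V[u(\tau)])\,d\tau$ stay nonnegative, which holds on a short enough interval by continuity since $\varphi:\Bbb R_+\to\Bbb R_+$.

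Concretely, I would fix $u_0\in\mathrm{conv}\,(\Bbb R^n)$, set $R=\|u_0\|_{C(S^{n-1})}$, and choose a closed ball $\mathcal{B}$ of radius, say, $2R+1$ around the constant curve $t\mapsto u_0$ inside $C([0,T^*];C(S^{n-1}))\cap\{u(t)\in\mathrm{conv}\,(\Bbb R^n)\}$. On this ball the quantities $V[u(s)]$ range over a compact subset of $\Bbb R_+$ (using continuity of the volume functional, cf. \eqref{41c}), so the local Lipschitz hypotheses \eqref{41a}, \eqref{41b} for $\varphi$ and $F$ become genuine global Lipschitz and boundedness estimates on $\mathcal{B}$, with constants $H$, $L$ and a uniform bound $M$ on $\|F(V[u(s)],u(s))\|_{C(S^{n-1})}$ and on $\varphi(V[u(s)])$. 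Using the semigroup estimate (2), namely $\|\exp\{\mathcal{A}t\}f\|_{C(S^{n-1})}\le e^{\|A\|t}\|f\|_{C(S^{n-1})}$, one bounds the operator image: the first term is at most $e^{\|A\|MT^*}R$ and the integral term at most $T^*e^{\|A\|MT^*}M$, so for $T^*$ small the image lies in $\mathcal{B}$. For the contraction estimate, given two curves $u,v\in\mathcal{B}$ one compares the two exponential factors using estimate (3) together with the Lipschitz bound on $\varphi\circ V$ and \eqref{41c}, and compares the $F$-terms using \eqref{41b}; collecting terms yields $\|(\Phi u)(t)-(\Phi v)(t)\|_{C(S^{n-1})}\le C(T^*)\sup_{[0,t]}\|u-v\|_{C(S^{n-1})}$ with $C(T^*)\to 0$ as $T^*\to 0^+$, whence contraction for $T^*$ small.

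Having obtained a unique continuous fixed point $u$, I would then upgrade regularity: since $s\mapsto\varphi(V[u(s)])$ is continuous, $t\mapsto\int_0^t\varphi(V[u(s)])\,ds$ is $C^1$, and the map $t\mapsto\exp\{\mathcal{A}t\}u_0$ is differentiable along the orbit with generator $\mathcal{A}$ (as set up in this section, with $\mathfrak{D}(\mathcal{A})=\mathrm{conv}\,(\Bbb R^n)$); differentiating \eqref{41} under the integral sign — the integrand being continuous in both variables — shows $u\in C^1([0,T^*);\mathrm{conv}\,(\Bbb R^n))$ and that it solves the corresponding set differential equation $D_Hu=\varphi(V[u])\,\mathcal{A}u+F(V[u],u)$. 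I expect the main obstacle to be bookkeeping around the composite exponential factor $\exp\{\mathcal{A}\int_s^t\varphi(V[u(\tau)])\,d\tau\}$: one must both show it differs little between the two curves (for the contraction) and that it is differentiable in $t$ with the right derivative (for the $C^1$ conclusion), and this requires combining the three listed semigroup inequalities with the chain rule through $\varphi$, $V[\cdot]$ and the time integral in a uniform way over the ball $\mathcal{B}$. The nonnegativity of the scalar multipliers — needed so that $\exp\{\mathcal{A}\cdot\}$ actually acts on genuine support functions rather than on elements of $\mathcal{A}_n$ — is a minor but essential point that the short time restriction takes care of.
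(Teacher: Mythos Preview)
Your proposal is correct and follows essentially the same route as the paper: a Banach contraction argument applied to the integral operator on a ball of curves near the constant $u_0$, with the self-mapping and contraction constants driven to values below~$1$ by shrinking $T^*$, and the three-term splitting of the difference (initial exponential, $F$-term, cross exponential) matching the paper's decomposition. Two small remarks: the paper works directly in the complete metric space $C([0,T];\mathrm{conv}\,(\Bbb R^n))$, which spares your extra check that the fixed point stays in the wedge; and the nonnegativity of $\int_s^t\varphi(V[u(\tau)])\,d\tau$ is automatic from $\varphi\ge 0$ and $s\le t$, not a consequence of short time. You actually go further than the paper in sketching the $C^1$ upgrade, which the paper's proof does not address explicitly.
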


\begin{proof} Let us define a metric space
$
C_T=C([0,T];\mathrm{conv}\,(\Bbb R^n))
$
with metric $\varrho_T(u_1(.),u_2(.))=\max\limits_{t\in[0,T]}\|u_1(t)-u_2(t)\|_{C(S^{n-1})}$.

From inequalities \eqref{41a}--\eqref{41c} it follows that there exist a neighbourhood $U$ of point $u_0\in \mathrm{conv}\,(\Bbb R^n)$ and  constants $L>0$, $L_1>0$ such that for all $u_i\in U$  the inequalities
$$
\|F(V[u_2],u_2)-F(V[u_1],u_1)\|_{C(S^{n-1})}\le L_1\,\|u_2-u_1\|_{C(S^{n-1})},
$$
$$
|V[u_2]-V[u_1]|\le L\,\|u_2-u_1\|_{C(S^{n-1})}
$$
are  fulfilled.

Since $\varphi(s)$ is a locally Lipschitz, then there exist a neighborhood $(V[u_0]-\delta, V[u_0]+\delta)$  and a constant $H>0$ such that for all $s_i\in (V[u_0]-\delta, V[u_0]+\delta) $, $i=1,2$ the inequality \eqref{41a} is fulfilled.

Choose $r>0$ such that $Lr< \delta$ and $K_r(u_0)\subset U$.

Define the operator $\mathfrak{G}: C_T\to C_T$
\begin{equation*}
\gathered
(\mathfrak{G}u)(t)=\exp\Big\{\mathcal{A}\int\limits_{0}^t\varphi(V[u(s)])\,ds\Big\}u_0\qquad\qquad\qquad\qquad\qquad\qquad\qquad\\
\qquad\quad\;\; +\int\limits_{0}^t\exp\Big\{\mathcal{A}\int\limits_{s}^t\varphi(V[u(\tau)])\,d\tau\Big\} F(V[u(s)],u(s))\,ds,\quad t\in[0,T].
\endgathered
\end{equation*} In space $C_T$ we consider a ball $B_{r}(u_0)$. We shall show that the number $T$ can be chosen so small that  $\mathfrak{G}$ maps the ball $B_{r}(u_0)$  in oneself. From estimates
\begin{equation*}
\gathered
\|(\mathfrak{G}u)(t)-u_0\|_{C(S^{n-1})}\le\Big\|\exp\Big\{\mathcal{A}\int\limits_{0}^t\varphi(V[u(s)])\,ds\Big\}u_0-u_0\Big\|_{C(S^{n-1})}\\
\;\;\;\;+\int\limits_{0}^t\Big\|\exp\Big\{\mathcal{A}\int\limits_{s}^t\varphi(V[u(\tau)])\,d\tau\Big\}F(V[u(s)],u(s))\Big\|_{C(S^{n-1})}\,ds\\
\qquad\le(e^{(\varphi(V[u_0])+LHr)t}-1)\|u_0\|\\
+\int\limits_{0}^te^{(\varphi(V[u_0])+LHr)(t-s)}(\|F(V[u_0],u_0)\|_{C(S^{n-1})}+L_1r)\,\,ds\\
=(e^{\be t}-1)\Big(\|u_0\|_{C(S^{n-1})}+\frac{\|F(V[u_0],u_0)\|_{C(S^{n-1})}+L_1r}{\be}\Big)
\endgathered
\end{equation*}
it follows the inequality
\begin{equation}
\varrho_T((\mathfrak{G}u)(.),u_0)\le (e^{\be T}-1)\Big(\|u_0\|_{C(S^{n-1})}+\frac{\|F(V[u_0],u_0)\|_{C(S^{n-1})}+L_1r}{\be}\Big).
\end{equation}
Here  $\be=\varphi(V[u_0])+LHr$,  $H$ is  a Lipschitz constant for the function $\varphi$.

Obviously that there exists $T_1>0$ such that for all $T\in(0,T_1]$ the following inequality holds
\begin{equation}
(e^{\be T}-1)(\|u_0\|_{C(S^{n-1})}+\frac{\|F(V[u_0],u_0)\|_{C(S^{n-1})}+L_1r}{\be})\le r,
\end{equation}
i.e. $\mathfrak{G}B_r(u_0)\subset B_r(u_0)$.  Next we show  that   decreasing of number $T$ can ensure that the operator $\mathfrak{G}$ is a contraction operator. If $u_i\in B_r(u_0)$, $i=1,2$ then
\begin{equation}
\gathered
(\mathfrak{G}u_1)(t)-(\mathfrak{G}u_2)(t)=\Big(\exp\{\mathcal{A}\int\limits_{0}^t\varphi(V[u_1(s)])\,ds\}u_0\\ \qquad\qquad\qquad\qquad\;\,
-\exp\{\mathcal{A}\int\limits_{0}^t\varphi(V[u_2(s)])\,ds\}u_0\Big)\\+
\int\limits_{0}^t\Big[\exp\{\mathcal{A}\int\limits_{s}^t\varphi(V[u_1(\tau)])\,d\tau\}F(V[u_1(s)],u_1(s))\\
\endgathered
\end{equation}
\begin{equation*}
\gathered
-\exp\{\mathcal{A}\int\limits_{s}^t\varphi(V[u_2(\tau)])\,d\tau\}F(V[u_2(s)],u_2(s))\Big]\,ds\\ \quad\;=
\exp\{\mathcal{A}\int\limits_{0}^t\varphi(V[u_2(s)])\,ds\}\Big(\exp\{\mathcal{A}\int\limits_{0}^t(\varphi(V[u_1(s)])u_0\\
-\varphi(V[u_2(s)]))\,ds\}-u_0\Big)\qquad\qquad\qquad\qquad\qquad\quad
\endgathered
\end{equation*}
\begin{equation*}
\gathered
+
\int\limits_{0}^t\exp\{\mathcal{A}\int\limits_{s}^t\varphi(V[u_1(\tau)])\,d\tau\}(F(V[u_1(s)],u_1(s))-F(V[u_2(s)],u_2(s)))\,ds\\+
\int\limits_{0}^t\exp\{\mathcal{A}\int\limits_{s}^t\varphi(V[u_2(\tau)])\,d\tau\}\Big(
\exp\{\mathcal{A}\int\limits_{s}^t(\varphi(V[u_1(\tau)])\qquad\qquad\qquad\qquad\;\;\\
-\varphi(V[u_2(\tau)])\,d\tau\}-I\Big)F(V[u_2(s)],u_2(s))\,ds.\qquad\qquad\qquad\qquad\qquad\qquad\;\;
\endgathered
\end{equation*}
Let us estimate for $t\in [0,\,T]$ each summand by norm separately, taking into account the obvious inequality  $e^x\le 1+xe^x$, $x\ge 0$. So we obtain
\begin{equation}
\gathered
\Big\|\exp\{\mathcal{A}\int\limits_{0}^t\varphi(V[u_1(s)])\,ds\}u_0-
\exp\{\mathcal{A}\int\limits_{0}^t\varphi(V[u_2(s)])\,ds\}u_0\Big\|_{C(S^{n-1})}\\
\le\Big\|\exp\{\mathcal{A}\int\limits_{0}^t\varphi(V[u_2(s)])\,ds\}\Big\|_{C(S^{n-1})}\qquad\qquad\qquad\qquad\qquad\qquad\\
\times
\Big\|\exp\{\mathcal{A}\int\limits_{0}^t(\varphi(V[u_1(s)])-\varphi(V[u_2(s)])\,ds\}u_0-u_0\Big\|_{C(S^{n-1})}\qquad\quad\\
\le e^{\be T}(e^{\|A\|HL\varrho_T(u_1,u_2)T}-1)
\le\|A\|HLTe^{(\be+2r\|A\|HL)T}\varrho_T(u_1,u_2).
\endgathered
\end{equation}
\begin{equation}
\gathered
\Big\|\int\limits_{0}^t\exp\{\mathcal{A}\int\limits_{s}^t\varphi(V[u_1(\tau)])\,d\tau\}(F(V[u_1(s)],u_1(s))-F(V[u_2(s)],u_2(s)))\,ds\Big\|_{C(S^{n-1})}\\
\le\int\limits_{0}^te^{\be(t-s)}\,ds L_1\varrho_T(u_1,u_2)\le \frac{L_1}{\be}(e^{\be T}-1)\varrho_T(u_1,u_2).
\endgathered
\end{equation}
\begin{equation}
\gathered
\Big\|\int\limits_{0}^t\exp\{\mathcal{A}\int\limits_{s}^t\varphi(V[u_2(\tau)])\,d\tau\}\Big(
\exp\{\mathcal{A}\int\limits_{s}^t(\varphi(V[u_1(\tau)])-\varphi(V[u_2(\tau)])\,d\tau\}-I\Big)\\
\times F(V[u_2(s)],u_2(s))\,ds\Big\|_{C(S^{n-1})}\qquad\qquad\qquad\qquad\qquad\qquad \qquad\qquad\qquad\qquad\quad\\
\le \int\limits_{0}^t e^{\be(t-s)}(e^{\|A\|LH\varrho_T(u_1,u_2)(t-s)}-1)\,ds\Big(\|F(V[u_0],u_0)\|_{C(S^{n-1})}+L_1r\Big)\qquad\qquad\\\le
\|A\|LH\varrho_T(u_1,u_2)\int\limits_{0}^t(t-s) e^{(\be+2r\|A\|LH)(t-s)}\,ds(\|F(V[u_0],u_0)\|_{C(S^{n-1})}+L_1r)\\=\Big(\frac{Te^{\eta T}}{\eta}-\frac{1}{\eta^2}(e^{\eta T}-1)\Big)LH\|A\|(\|F(V[u_0],u_0)\|_{C(S^{n-1})}+L_1r)\varrho_T(u_1,u_2).\qquad
\endgathered
\end{equation}
Here $\eta=\be+2rLH\|A\|$.

From these estimates we obtain the inequality
\begin{equation}
\varrho_T(\mathfrak{G}u_1,\mathfrak{G}u_2)\le\gm(T)\varrho_T(u_1,u_2),
\end{equation}
where
\begin{equation}
\gathered
\gm(T)=\|A\|HLTe^{\eta T}+\frac{L_1}{\be}(e^{\be T}-1)\\+\Big(\frac{Te^{\eta T}}{\eta}-\frac{1}{\eta^2}(e^{\eta T}-1)\Big)LH\|A\|(\,\|F(V[u_0],u_0)\|_{C(S^{n-1})}+L_1r).
\endgathered
\end{equation}
 If $T\to 0$ then $\gm(T)\to 0$, so there exists a number $T_2>0$ such that for $T\in[0,T_2)$ and $\gm(T)<1$ the operator $\mathfrak{G}$ is a contraction operator which maps the ball $B_r(u_0)$  in oneself.
Denote  $T^*<\min\{T_1,T_2\}$. Applying the  Banach fixed-point theorem, we conclude that there exists  a unique fixed point $u^*\in B_r(u_0)\subset C_{T^*}$, i.e. $\mathfrak{G}u^*=u^*$. Thus the integral equation \eqref{41} has a solution $u^*(t)$ which satisfies the condition $u(0)=u_0$. This completes the proof of theorem.
\end{proof}
 Next we  consider two solutions $u_1(t)$ and $u_2(t)$ of the integral equation \eqref{41}, which are defined on a half-intervals $[0,r_1)$ and  $[0,r_2)$  respectively.

 Let us show that $u_1(t)\equiv u_2(t)$ for all $t\in[0,\min(r_1,r_2))$. Consider a set $\mathcal T=\{t>0,\,| u_1(t)=u_2(t)\}$, which is  nonempty, since $[0,T^*]\subset\mathcal T$ and closed, due to the continuity of these solutions.

Denote  $\tau^*=\sup\mathcal T\in\mathcal T$, $\tau^*<\min(r_1,r_2)$. Consider a solution $\widetilde{u}(t)$ of integral equation \eqref{41} with initial condition $\widetilde{u}(0)=u_1(\tau^*)=u_2(\tau^*)$. It is easy to prove that $\widetilde{u_1}(t)=u_1(t+\tau^*)$ and $\widetilde{u_2}(t)=u_1(t+\tau^*)$  are solutions of integral equation \eqref{41}  on the interval  $t\in[0,T^*]$, $T^*+\tau^*<\min(r_1,r_2)$.

Since $\widetilde{u}_1(0)=\widetilde{u}_2(0)$, then by the above, $T^*$ can be chosen so small that $\widetilde{u}_1(t)=\widetilde{u}_2(t)$, for all $t\in[0,T^*]$. Therefore $u_1(t)=u_2(t)$ for all $t\in[\tau^*,\tau^*+T^*]$, which contradicts the choice of $\tau^*$. So we have $\tau^*=\min(r_1,r_2)$.

From this assertion  it is  easy to prove that there exists a  unique solution of the integral equation \eqref{41} which is  nonextensible and defined on a maximal half-interval  $[0,\Omega^+(u_0))$.

 Let define a set $D=\bigcup\limits_{u_0\in\mathrm{conv}\,(\Bbb R^n)}[0,\Omega^+(u_0))\times\{u_0\}$ and for any $(t_0,u_0)\in D$  we set that $\mathfrak{F}^{t}(u_0)=u(t)$.

\begin{theorem}\label{t42} The mapping $\mathfrak{F}\,\,:D\to\mathrm{conv}\,(\Bbb R^n)$ is a local semiflow.
\end{theorem}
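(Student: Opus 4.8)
The plan is to verify, for $\mathfrak{F}^t(u_0):=u(t)$ with $u$ the unique nonextensible solution of \eqref{41} on $[0,\Omega^+(u_0))$ constructed above, the four defining properties of a local semiflow, and to check that the set $D$ has the structure demanded in the definition.

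Properties (1) and (3) are immediate. Setting $t=0$ in \eqref{41} makes both integrals vanish and leaves $\exp\{\mathcal{A}\cdot 0\}u_0=u_0$, so $\mathfrak{F}^0(u_0)=u_0$. By Theorem \ref{t41} the solution $u$ belongs to $C^1([0,\Omega^+(u_0));\mathrm{conv}\,(\Bbb R^n))$, so $t\mapsto\mathfrak{F}^t(u_0)$ is continuous and property (3) holds at every $(t_0,u_0)\in D$.

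For the semigroup property (2) the decisive facts are that \eqref{41} is autonomous (the arguments of $\varphi$ and $F$ depend on $u(s)$, not on $s$ explicitly) and that $\{\exp\{\mathcal{A}t\}\}_{t\ge 0}$ is a genuine one-parameter semigroup: by \eqref{exop}, $\exp\{\mathcal{A}s\}(\exp\{\mathcal{A}t\}u)=h_u(e^{A^Tt}e^{A^Ts}p)=\exp\{\mathcal{A}(t+s)\}u$. Fix $(t_2,u_0)\in D$, put $v_0=\mathfrak{F}^{t_2}(u_0)=u(t_2)$ and $w(t)=u(t+t_2)$ for $t\in[0,\Omega^+(u_0)-t_2)$. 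Splitting the integrals in \eqref{41} at $t_2$, factoring out $\exp\{\mathcal{A}\int_{t_2}^{t+t_2}\varphi(V[u(\tau)])\,d\tau\}$ by the semigroup identity, and substituting $s\mapsto s+t_2$, $\tau\mapsto\tau+t_2$, one checks that $w$ satisfies \eqref{41} with initial datum $v_0$; this is exactly the time shift already used in the uniqueness argument preceding the theorem. By uniqueness of the nonextensible solution, $w=\mathfrak{F}^{\,\cdot}(v_0)$ and $\Omega^+(v_0)\ge\Omega^+(u_0)-t_2$. Hence, whenever $(t_1+t_2,u_0)\in D$ one has $t_1<\Omega^+(u_0)-t_2\le\Omega^+(v_0)$, so $(t_1,v_0)\in D$ and $\mathfrak{F}^{t_1}(\mathfrak{F}^{t_2}(u_0))=w(t_1)=u(t_1+t_2)=\mathfrak{F}^{t_1+t_2}(u_0)$.

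The main obstacle is property (4): continuous dependence on the initial datum at a fixed time. Fix $(t_0,u_0)\in D$, so the arc $\{u(s):s\in[0,t_0]\}$ is a compact subset of $\mathrm{conv}\,(\Bbb R^n)$. The idea is to globalize the contraction estimates of Theorem \ref{t41} along this arc: cover it by finitely many balls on which the local Lipschitz constants for $\varphi$, $F$ and $V[\cdot]$ furnished by \eqref{41a}--\eqref{41c} are uniform, so that for every $u'$ close enough to $u_0$ the corresponding solution exists on all of $[0,t_0]$ and stays in a fixed tube about $u(\cdot)$; this simultaneously yields the lower semicontinuity of $\Omega^+$. On that tube, subtracting the two copies of \eqref{41}, estimating term by term exactly as in the proof of Theorem \ref{t41} (using the Lipschitz bounds and $e^x\le 1+xe^x$), and invoking Gronwall's inequality gives $\|\mathfrak{F}^{t_0}(u')-\mathfrak{F}^{t_0}(u_0)\|_{C(S^{n-1})}\le C(t_0)\,\|u'-u_0\|_{C(S^{n-1})}$, which is property (4). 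Finally, since $\Omega^+(u_0)>0$ for every $u_0$ and $\Omega^+$ is lower semicontinuous, $D=\bigcup_{u_0}[0,\Omega^+(u_0))\times\{u_0\}$ contains $\{0\}\times\mathrm{conv}\,(\Bbb R^n)$ and $D\setminus(\{0\}\times\mathrm{conv}\,(\Bbb R^n))=\{(t,u_0):0<t<\Omega^+(u_0)\}$ is open and nonempty, completing the verification that $\mathfrak{F}$ is a local semiflow.
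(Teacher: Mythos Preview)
Your proposal is correct, and for properties (1)--(3) it is at least as clean as the paper's: you invoke the $C^1$ regularity from Theorem~\ref{t41} for (3), whereas the paper re-derives an explicit modulus of continuity at $t=0$ from the integral equation; for (2) you spell out the time-shift and semigroup identity, while the paper simply asserts ``direct calculation.'' You also check the openness of $D\setminus(\{0\}\times\mathrm{conv}\,(\Bbb R^n))$ via lower semicontinuity of $\Omega^+$, a point the paper leaves implicit.

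The genuine divergence is in property (4). You linearize: once the perturbed solution is trapped in a tube about the reference arc, the exponential terms $e^{\eta_1\int_0^t\xi(s)\,ds}-1$ are bounded by a constant times $\int_0^t\xi(s)\,ds$, and Gronwall closes the estimate with a bound of the form $C(t_0)\|u'-u_0\|$. The paper instead keeps the nonlinearity: it derives the integral inequality
\[
\xi(t)\le\frac{\alpha}{\eta_1}\Big(e^{\eta_1\int_0^t\xi(s)\,ds}-1\Big)+\beta\|u_1-u_0\|_{C(S^{n-1})},
\]
passes by comparison to the Bernoulli-type ODE $\overline{\xi}\,'=(\alpha-\beta\|u_1-u_0\|)\overline{\xi}+\overline{\xi}^{\,2}$, and integrates explicitly to obtain a concrete $\delta=\min\{1,\,\alpha/(2\beta(e^{\alpha T}-1)),\,\varepsilon e^{-\alpha T}/(2\beta)\}$. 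Your route is the standard ODE-theory argument and is shorter; the paper's route avoids the separate tube/bootstrap step (the Bernoulli bound itself controls how long $\xi$ stays small) and produces explicit constants, at the cost of a more intricate comparison.
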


\begin{proof} It should be noted that the property (1) is obvious, and the property (2) (semigroup property) can be verified by direct calculation.

Let us prove the property (3) of the local semiflow. First of all, we establish the following result: for any $\varepsilon>0$ and $u_0\in\mathrm{conv}\,(\Bbb R^n)$ there exists $\delta=\delta(\varepsilon,u_0)>0$  such that the inequality $0<t<\delta$ implies the estimate $\|\mathfrak{F}^t(u_0)-u_0\|_{C(S^{n-1})}<\varepsilon$. Indeed, $\mathfrak{F}^t(u_0)=u(t)$  is a solution of integral equation  \eqref{41}. Next, we choose $\dl<T^*$, then for $0\le t<\dl$  the following estimate holds
\begin{equation}
\gathered
\|\mathfrak{F}^t(u_0)-u_0\|_{C(S^{n-1})}=\Big\|\exp\{\mathcal{A}\int\limits_{0}^t\varphi(V[u(s)])\,ds\}u_0-u_0\Big\|_{C(S^{n-1})}\\
+\int\limits_{0}^t
\Big\|\exp\{\mathcal{A}\int\limits_{s}^t\varphi(V[u(\tau)])\,d\tau\}\Big\|_{C(S^{n-1})}\|F(V[u(s)],u(s))\|_{C(S^{n-1})}\,ds\\
\le(e^{\be \delta}-1)(\|u_0\|_{C(S^{n-1})}+\frac{\|F(V[u_0],u_0)\|_{C(S^{n-1})}+L_1r}{\be}).\qquad\qquad\;\;
\endgathered
\end{equation}
If we decrease $\delta$ such that the inequality
$$
(e^{\be \delta}-1)(\|u_0\|_{C(S^{n-1})}+\frac{\|F(V[u_0],u_0)\|_{C(S^{n-1})}+L_1r}{\be})<\varepsilon
$$
 holds,  then we obtain the required estimate $\|\mathfrak{F}^t(u_0)-u_0\|_{C(S^{n-1})}<\varepsilon$.

Next, we prove property (4): for any $\varepsilon>0$ and $(T,u_0)\in D$
there exists $\delta=\delta(\varepsilon,T,u_0)>0$ such that for all $u_1\in\mathrm{conv}\,(\Bbb R^n)$ $\|u_1-u_0\|_{C(S^{n-1})}<\delta$ the inequality $\|\mathfrak{F}^T(u_1)-\mathfrak{F}^{T}(u_0)\|_{C(S^{n-1})}<\varepsilon$ holds true.

Denote $\widetilde{u}(t)=\mathfrak{F}^t(u_1)$, $u(t)=\mathfrak{F}^t(u_0)$, $0\le t\le T$, $\xi(t)=\|\widetilde{u}(t)-u(t)\|_{C(S^{n-1})}$. Then, from the integral equation \eqref{41}, it follows that
$$
\gathered
\xi(t)\le\Big\|\exp\{\mathcal{A}\int\limits_{0}^t\varphi(V[\widetilde{u}(s)])\,ds\}u_1-
\exp\{\mathcal{A}\int\limits_{0}^t\varphi(V[u(s)])\,ds\}u_0\Big\|_{C(S^{n-1})}\\+\int\limits_{0}^t
\Big\|\exp\{\mathcal{A}\int\limits_{s}^t\varphi(V[\widetilde{u}(\tau)])\,d\tau\}F(V[\widetilde{u}(s)],\widetilde{u}(s))\qquad\qquad\qquad\quad\,
\\ \;\;-
\exp\{\mathcal{A}\int\limits_{s}^t\varphi(V[u(\tau)])\,d\tau\}F(V[u(s)],u(s))\Big\|_{C(S^{n-1})}\,\,ds=
I_1+I_2.
\endgathered
$$
If $\dl<1$, then $\|u_1\|_{C(S^{n-1})}<\|u_0\|_{C(S^{n-1})}+1$. Denote $k=k(u_0,T)=\varphi(V[u_0])+HL\max\limits_{s\in[0,T]}\|u(s)-u_0\|_{C(S^{n-1})}$, $\eta_1=\|A\|HL$. Then for all $t\in[0,T]$ we get the estimate
$$
I_1\le e^{kT\|A\|}(e^{\eta_1\int\limits_{0}^t\xi(s)\,ds}-1)(1+\|u_0\|_{C(S^{n-1})})+e^{\|A\|kT}\|u_1-u_0\|_{C(S^{n-1})}.
$$
Let $K=\sup\limits_{t\in[0,T]}\|F(V[u(t)],u(t))\|_{C(S^{n-1})}$, then
$$
I_2\le\frac{e^{\|A\|kT}L_1}{\eta_1}(e^{\eta_1\int\limits_{0}^t\xi(s)\,ds}-1)+TKe^{\|A\|kT}(e^{\eta_1\int\limits_{0}^t\xi(s)\,ds}-1).
$$
Thus, we get the integral inequality
$$
\xi(t)\le\frac{\alpha}{\eta_1}(e^{\eta_1\int\limits_{0}^t\xi(s)\,ds}-1)+\beta\|u_1-u_0\|_{C(S^{n-1})},\quad 0\le t\le T,
$$
where $\alpha=\alpha(T,u_0)>0$, $\beta=\beta(T,u_0)>0$ are some constants. By using the principle of comparison, it is easy to show that
$$
\xi(t)\le\overline{\xi}(t),\quad 0\le t\le T,
$$
where $\overline{\xi}(t)$ is a  solution of  the IVP
$$
\frac{d\overline{\xi}}{dt}=(\alpha-\beta\|u_1-u_0\|)\overline{\xi}+\overline{\xi}^2,\quad \overline{\xi}(0)=\beta\|u_1-u_0\|,\quad 0\le t\le T.
$$
By applying  the  differential inequality theorem, we obtain
\begin{equation}\label{4-1-2}
\frac{d\overline{\xi}}{dt}\le\alpha\overline{\xi}+\overline{\xi}^2,\quad \overline{\xi}(0)=\beta\|u_1-u_0\|_{C(S^{n-1})},\quad 0\le t\le T.
\end{equation}

 If we integrate the inequality \eqref{4-1-2},  we get the estimate
$$
\overline{\xi}(t)\le\frac{\alpha e^{\alpha t}\overline{\xi}(0)}{\alpha+(1-e^{\alpha t})\overline{\xi}(0)},
$$
 which holds true for all $t\in[0,T]$ for which $\alpha+(1-e^{\alpha t})\overline{\xi}(0)>0$. The number  $\delta$  we can decrease  so much that $\delta<\frac{\alpha}{2\beta(e^{\alpha T}-1)}$, then for all
$t\in[0,T]$ we have $\alpha+(1-e^{\alpha t})\overline{\xi}(0)>\frac{\alpha}{2}$ and as a result we get the inequality
$$
\overline{\xi}(t)\le 2e^{\al t}\overline{\xi}(0),\quad t\in[0,T].
$$
So, if we choose $\dl=\min\{1,\frac{\alpha}{2\beta(e^{\alpha T}-1)},\frac{\varepsilon e^{-\alpha T}}{2\beta}\}$ then for $t=T$ we obtain
$$
\|\mathfrak{F}^T(u_1)-\mathfrak{F}^T(u_0)\|_{C(S^{n-1})}=\|\widetilde{u}(T)-u(T)\|_{C(S^{n-1})}=\xi(T)\le\overline{\xi}(T)<\varepsilon,
$$
and the proof is complete.
\end{proof}

 Next we will  look  for  conditions under which the semiflow $\mathfrak{F}$ is global.
To do this, we establish the conditions of the global existence of the solution $u(t)$ of the integral equation \eqref{41}, using the idea of Lyapunov's direct method \cite{lak}, \cite{lyap}.

First of all, consider the  derivative of functional $V[u]$ along the orbit of semiflow $\mathfrak{F}$
$$
\frac{dV[\mathfrak{F}^t(u)]}{dt}\Big|_{t=0}=\lim\limits_{t\to 0+}\frac{V[\mathfrak{F}^t(u)]-V[u]}{t}.
$$
 We  get  from \eqref{41} the equality
\begin{equation*}
\gathered
V[\mathfrak{F}^t(u_0)]=V\Big[\exp\{\mathcal{A}\int\limits_{0}^t\varphi(V[u(s)])\,ds\}u_0\qquad\;\\ \qquad\qquad\qquad\qquad\qquad+
\int\limits_{0}^t\exp\{\mathcal{A}\int\limits_{s}^t\varphi(V[u(\tau)])\,d\tau\}F(V[u(s)],u(s))\,ds\Big].
\endgathered
\end{equation*}

Applying the  Steiner formula, we obtain
$$
\gathered
V[\mathfrak{F}^t(u_0)]=V\Big[\exp\{\mathcal{A}\int\limits_{0}^t\varphi(V[u(s)])\,ds\}u_0\Big]\\+ntV_1\Big[\exp\{\mathcal{A}\int\limits_{0}^t\varphi(V[u(s)])\,ds\}u_0,
\frac{1}{t}\int\limits_{0}^t\exp\{\mathcal{A}\int\limits_{s}^t\varphi(V[u(\tau)])\,d\tau\}F(V[u(s)],u(s))\,ds\Big]\\+\sum\limits_{k=2}^nt^kC_n^k
V_k\Big[\exp\{\mathcal{A}\int\limits_{0}^t\varphi(V[u(s)])\,ds\}u_0,
\frac{1}{t}\int\limits_{0}^t\exp\{\mathcal{A}\int\limits_{s}^t\varphi(V[u(\tau)])\,d\tau\}F(V[u(s)],u(s))\,ds\Big].
\endgathered
$$
By Liouville--Ostrogradskij theorem, we obtain
$$
V\Big[\exp\{\mathcal{A}\int\limits_{0}^t\varphi(V[u(s)])\,ds\}u_0\Big]=\exp\{\tr A\int\limits_0^t\varphi(V[u(s)])\,ds\}V[u_0].
$$
As a result of continuity of the  mixed volume functional, we get
$$
\frac{dV[\mathfrak{F}^t(u_0)]}{dt}\Big|_{t=0}=\tr A\varphi(V[u_0])V[u_0]+nV_1[u_0,F(V[u_0],u_0)].
$$

For the formulation of conditions  under which the semiflow $\mathfrak{F}$ is global  we need the following assumptions.

\begin{assumption} Assume  that  the  following hold:

(1) there exist the constants $N>0$ and $\al$ such that $\|e^{At}\|\le Ne^{\al t}$, $\,\,t\ge 0$;

(2) there exist the functions $g_*\in C(\Bbb R^+;\Bbb R_+)$ and $g^*\in C(\Bbb R^+;\Bbb R_+)$, satisfying the local Lipschitz condition such that for all $u\in\mathrm{conv}\,(\Bbb R^n)$ the following estimate holds
$$
g_*(V[u])\le nV_1[u,F(V[u],u)]\le g^*(V[u]);
$$

(3)  a maximal solution $\zeta^+(t)$  of the IVP for comparison equation
$$
\frac{d\zeta}{dt}=\tr A\varphi(\zeta)\zeta+g^*(\zeta),\quad\zeta(0)=V[u_0]
$$
and a minimal solution  $\chi_-(t)$ of the IVP for comparison equation
$$
\frac{d\chi}{dt}=\tr A\varphi(\chi)\chi+g_*(\chi),\quad\chi(0)=V[u_0]
$$
 are   infinitely continuable  to the semiaxis $\Bbb R_+$;

(4) there exists the function $F^+\in C(\Bbb R_+;\Bbb R_+)$, $F^+(t,.)$ is a  nondecreasing and such that
$$
\sup\limits_{\chi_-(t)\le s\le\zeta^+(t)}\|F(s,u)\|_{C(S^{n-1})}\le F^+(t,\|u\|_{C(S^{n-1})}, V[u_0])
$$
for all $(t,u)\in\Bbb R_+\times\mathrm{conv}\,(\Bbb R^n)$;

(5) a maximal solution $\omega^+(t)$ of comparison equation
$$
\frac{d\omega}{dt}=\alpha\Lambda^{\pm}(t,V[u_0])\omega+F^+(t,N\omega,V[u_0]),\quad\omega(0)=\|u_0\|_{C(S^{n-1})}
$$
is   infinitely continuable  to the semiaxis  $\Bbb R_+$. Here  we write
 $"+"$ when $\al>0$ and $"-"$ when $\al\le 0$, $\Lambda^+(t,V[u_0])=\max\limits_{\chi_-(t)\le s\le\zeta^+(t)}\varphi(r)$, $\Lambda^-(t,V[u_0])=\min\limits_{\chi_-(t)\le s\le\zeta^+(t)}\varphi(r)$.
\end{assumption}

\begin{theorem}\label{t1} Assume that for a given $u_0\in\mathrm{conv}\,(\Bbb R^n)$ the conditions (1)---(5) of Assumption  4.1 are fulfilled, then for any $t\ge 0$ there exists  $\mathfrak{F}^t(u_0)$.
\end{theorem}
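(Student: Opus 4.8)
The plan is to prove that the maximal half-interval of existence $[0,\Omega^+(u_0))$ of the solution $u(t)$ of \eqref{41} is in fact $[0,+\infty)$; granting this, the construction preceding Theorem~\ref{t42} gives $\mathfrak{F}^t(u_0)=u(t)$ for every $t\ge 0$, which is the assertion. Arguing by contradiction, suppose $\Omega^+(u_0)<+\infty$. The argument follows Lyapunov's direct method: first I would produce an a priori bound on the scalar quantity $V[u(t)]$ on $[0,\Omega^+(u_0))$; then I would feed this bound back into \eqref{41} to obtain an a priori bound on $\|u(t)\|_{C(S^{n-1})}$; and finally I would invoke a continuation argument to contradict the maximality of $\Omega^+(u_0)$.

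To bound the mixed volume, I would transfer the identity for the derivative of $V$ along the orbit, established above at $t=0$, to an arbitrary point of the orbit by means of the semigroup property (Theorem~\ref{t42}): since $V[\mathfrak{F}^{t+h}(u_0)]=V[\mathfrak{F}^h(u(t))]$, the right-hand derivative of $t\mapsto V[u(t)]$ exists and equals $\tr A\,\varphi(V[u(t)])V[u(t)]+nV_1[u(t),F(V[u(t)],u(t))]$; moreover $t\mapsto V[u(t)]$ is locally Lipschitz (this is \eqref{41c} composed with $u\in C^1$), which is enough to run the comparison machinery. Combining with Assumption~4.1(2) yields the two-sided differential inequality
\[
\tr A\,\varphi(V[u(t)])V[u(t)]+g_*(V[u(t)])\le\frac{dV[u(t)]}{dt}\le\tr A\,\varphi(V[u(t)])V[u(t)]+g^*(V[u(t)]),
\]
and the differential inequality theorem, together with the local Lipschitz continuity of $\varphi,g_*,g^*$, gives $\chi_-(t)\le V[u(t)]\le\zeta^+(t)$ throughout the interval of existence. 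By Assumption~4.1(3) the extremal solutions $\chi_-,\zeta^+$ are defined on all of $\Bbb R_+$, so $V[u(t)]$ stays in a compact interval as $t\uparrow\Omega^+(u_0)$.

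Next, writing $\rho(t)=\|u(t)\|_{C(S^{n-1})}$, I would take $C(S^{n-1})$-norms in \eqref{41} and apply the estimate $\|\exp\{\mathcal{A}\lambda\}f\|_{C(S^{n-1})}\le Ne^{\al\lambda}\|f\|_{C(S^{n-1})}$ of Assumption~4.1(1) with $\lambda=\int_s^t\varphi(V[u(\tau)])\,d\tau\ge 0$. Using $\chi_-(\tau)\le V[u(\tau)]\le\zeta^+(\tau)$ the exponent is majorised by $\al\int_s^t\Lambda^{\pm}(\tau,V[u_0])\,d\tau$, with $\Lambda^{+}$ chosen when $\al>0$ and $\Lambda^{-}$ when $\al\le 0$, so that the exponential is controlled in either sign; and by Assumption~4.1(4), $\|F(V[u(s)],u(s))\|_{C(S^{n-1})}\le F^+(s,\rho(s),V[u_0])$. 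This produces the integral inequality
\begin{multline*}
\rho(t)\le Ne^{\al\int_0^t\Lambda^{\pm}(\sigma,V[u_0])\,d\sigma}\|u_0\|_{C(S^{n-1})}\\
+N\int_0^t e^{\al\int_s^t\Lambda^{\pm}(\sigma,V[u_0])\,d\sigma}F^+(s,\rho(s),V[u_0])\,ds.
\end{multline*}
Denoting the right-hand side by $\psi(t)$, differentiation gives $\dot\psi=\al\Lambda^{\pm}(t,V[u_0])\psi+NF^+(t,\rho(t),V[u_0])$, and $\rho\le\psi$ together with the monotonicity of $F^+(t,\cdot\,,V[u_0])$ yields $\dot\psi\le\al\Lambda^{\pm}(t,V[u_0])\psi+NF^+(t,\psi,V[u_0])$, $\psi(0)=N\|u_0\|_{C(S^{n-1})}$. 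Since $N\omega^+$ (with $\omega^+$ the maximal solution from Assumption~4.1(5) and $N\ge 1$) satisfies exactly the equation $\dot z=\al\Lambda^{\pm}(t,V[u_0])z+NF^+(t,z,V[u_0])$ with $z(0)=N\|u_0\|_{C(S^{n-1})}$ and is maximal for it, the comparison theorem gives $\rho(t)\le N\omega^+(t)$ on $[0,\Omega^+(u_0))$. By Assumption~4.1(5) $\omega^+$ is defined on all of $\Bbb R_+$, hence $\rho$ stays bounded as $t\uparrow\Omega^+(u_0)$.

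Finally, the bound $\rho(t)\le N\max_{t\in[0,\Omega^+(u_0)]}\omega^+(t)=:R$ confines $u(t)$ to the set $\{u\in\mathrm{conv}\,(\Bbb R^n):u\subset RK\}$, $K$ the unit ball, which is compact in the Hausdorff metric by the Blaschke selection theorem; on such a compact set the constants entering Theorem~\ref{t41} may be taken uniform, so the local existence time is bounded below by some $T^*_0>0$ independently of the base point on that set. Applying Theorems~\ref{t41}--\ref{t42} at $u(\Omega^+(u_0)-T^*_0/2)$ then extends $u$ beyond $\Omega^+(u_0)$, contradicting maximality; therefore $\Omega^+(u_0)=+\infty$, and $\mathfrak{F}^t(u_0)$ exists for all $t\ge 0$. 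The step I expect to be the main obstacle is transferring the derivative identity for $V$ from $t=0$ to the whole orbit and justifying, via local Lipschitz continuity of $t\mapsto V[u(t)]$, that one-sided differential inequalities suffice for the comparison theorem; the sign bookkeeping between $\al$ and the choice of $\Lambda^{+}$ versus $\Lambda^{-}$ is a secondary, purely technical point.
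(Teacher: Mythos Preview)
Your argument is correct and its first two stages—the two-sided bound $\chi_-(t)\le V[u(t)]\le\zeta^+(t)$ obtained from the differential identity for $V$ and the comparison principle, followed by the bound $\|u(t)\|_{C(S^{n-1})}\le N\omega^+(t)$ from the integral equation—are exactly what the paper does (the paper compresses the first stage into a reference to conditions (2)--(3) of Assumption~4.1, but it is the same content). The continuation step is where you diverge: you appeal to Blaschke compactness and a uniform lower bound on the local existence time from Theorem~\ref{t41}, which is the standard ODE continuation argument and is valid, though it requires checking (left implicit in your sketch) that the Lipschitz constants and the parameters $r,\beta,T_1,T_2$ in the proof of Theorem~\ref{t41} can be chosen uniformly over the compact set $\{u\subset RK\}$. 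The paper instead avoids this bookkeeping: once $\|u(t)\|\le N\omega^+(t)$ is in hand, it estimates $\|u(t_2)-u(t_1)\|_{C(S^{n-1})}$ directly from the integral equation for $0<t_1<t_2<\Omega^+(u_0)$, shows this tends to zero as $t_1,t_2\to\Omega^+(u_0)-0$, and invokes the Cauchy criterion to obtain a limit $u^*=\lim_{t\to\Omega^+(u_0)-0}u(t)$; then continuity of $\mathfrak{F}^\tau$ in the initial datum (property (4) of the semiflow, established in Theorem~\ref{t42}) lets one restart from $u^*$ and extend the solution past $\Omega^+(u_0)$. Both routes are sound; the paper's is slightly more self-contained, while yours is the textbook mechanism and makes the role of compactness explicit.
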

\begin{proof} Let $[0,\Omega^+(u_0))$ be a maximal half-interval of existence of solutions $u(t)=\mathfrak{F}^t(u_0)$ of integral equation \eqref{41}. Assume that $\Omega^+(u_0)<\infty$.  We get from  \eqref{41} and conditions (2), (3) of Assumption 4.1 the norm estimate
$$
\gathered
\|u(t)\|_{C(S^{n-1})}\le N(\exp\{\al\int\limits_{0}^t\Lambda^{\pm}(s)\,ds\}\|u_0\|_{C(S^{n-1})}\qquad
\\ \qquad\qquad\qquad\qquad\qquad+\int\limits_0^t\exp\{\al\int\limits_{s}^t\Lambda^{\pm}(\tau)\,d\tau\}F^+(s,\|u(s)\|_{C(S^{n-1})})\,ds).
\endgathered
$$
 By using the principle of  comparison  it is easy to prove that for all $t\ge 0$, the following inequality  holds
$$
\|u(t)\|_{C(S^{n-1})}\le N\omega^+(t),\quad \omega^+(0)=\|u_0\|_{C(S^{n-1})}.
$$
Let $0<t_1<t_2<\Omega^+(u_0)$, then  the equality
$$
\gathered
u(t_2)=\exp\{\mathcal{A}\int\limits_{t_1}^{t_2}\varphi(V[u(s)])\,ds\}u(t_1)\\ \qquad\qquad\qquad\qquad\quad\;+
\int\limits_{t_1}^{t_2}\exp\{\mathcal{A}\int\limits_{s}^{t_2}\varphi(V[u(\tau)])\,d\tau\}F(V[u(s)],u(s))\,ds
\endgathered
$$
implies the estimate
$$
\gathered
\|u(t_2)-u(t_1)\|_{C(S^{n-1})}\le(\exp\{\al\int\limits_{t_1}^{t_2}\Lambda^{\pm}(s)\,ds\}-1)N\omega^+(t_1)\qquad\qquad\\ \qquad\qquad\qquad\qquad\quad\;+\int\limits_{t_1}^{t_2}
\exp\{\mathcal{A}\int\limits_{s}^{t_2}\al\Lambda^{\pm}(\tau)\,d\tau\}F^+(s,N\omega^+(s))\,ds.
\endgathered
$$
Hence, for $t_1\to\Omega^+(u_0)-0$, $\,t_2\to\Omega^+(u_0)-0$ we get  $\|u(t_2)-u(t_1)\|_{C(S^{n-1})}\to 0$. From the Cauchy criterion it follows that  there exists a limit $$u^*=\lim\limits_{t\to\Omega^+(u_0)-0}u(t)=\lim\limits_{t\to\Omega^+(u_0)-0}\mathfrak{F}^t(u_0).$$

For sufficiently small positive $\tau$ we  get
$$
\mathfrak{F}^{\tau}(u^*)=\mathfrak{F}^{\tau}(\lim\limits_{t\to\Omega^+(u_0)-0}\mathfrak{F}^t(u_0))=\lim\limits_{t\to\Omega^+(u_0)-0}\mathfrak{F}^{\tau}
(\mathfrak{F}^t(u_0))=\mathfrak{F}^{\Omega^+(u_0)+\tau}(u_0).
$$

Thus, the element $\mathfrak{F}^{t}(u_0)$ is defined for $t\in[0,\Omega^+(u_0)+\tau]$, $\tau>0$, which contradicts the definition of $\Omega^+(u_0)$. This contradiction proves that  $\Omega^+(u_0)=+\infty$, completing  the  proof.
\end{proof}

Let $G$  be a closed subset of the space $\mathrm{conv}\,(\Bbb R^n)$. If for any $u_0\in G$  the conditions of Assumption 4.1 are fulfilled, then  the local semiflow $\mathfrak{F}$, narrowed to the set $G$ is global.

\begin{remark}  Consider  a linear control system
\begin{equation}\label{control}
\dot{x}=Ax+u,
\end{equation}
where $x\in \mathbb{R}^n$, $x(0)\in D_0\in \mathrm{conv}\,(\Bbb R^n)$, $u\in U \in \mathrm{conv}\,(\Bbb R^n)$. Let $D(t, D_0)$ be a set of attainability \cite{chernous}  for control system \eqref{control}. Then  a support function $h(t)\in C(S^{n-1})$ for set $D(t, D_0)$ satisfies the  differential equation in Banach space $ C(S^{n-1})$
$$
\frac{dh}{dt}=\mathcal{A}h+h_U,
$$
where $h_U$ is a support function for set $U\in \mathrm{conv}\,(\Bbb R^n)$. So, in this case a family of maps $\mathfrak{F}^t\colon D_0\to D(t,D_0)$  generates a semiflow in space $\mathrm{conv}\,(\Bbb R^n)$, which is a particular case of semiflow  introduced above  if we state $\varphi(s)=1$ and $F(s,u)=h_U$.
\end{remark}

Next we   consider the stability problem in terms of two measures of  semiflow $\mathfrak{F}$.

Assume that $\mathfrak{F}$  is a global semiflow  on the  closed set  $G\subset\mathrm{conv}\,(\Bbb R^n)$, containing a fixed point $\theta_0=\{0\}$ of this semiflow,  i.e. $\mathfrak{F}^t(\theta_0)=\theta_0$, $t\ge 0$.

We shall consider  the stability problem with respect to measures $h_0[u]=h[u]=V[u]$. Such type of stability will be called a divergent stability.

In order to apply the assertion of the general  stability theorem, we  shall formulate some   assumptions.

\begin{assumption} Assume that there exist the functionals $W_k\,\,:\mathrm{conv}\,(\Bbb R^n)\to\Bbb R_+$ which is differentiable along the orbit of semiflow $\mathfrak{F}$ and functions $f_i\in C(\Bbb R^{i+1};\Bbb R)$, $\psi_i(\Bbb R^i_+;\Bbb R)$, $i=\overline{1,k}$ such that

(1) the following inequalities hold

$$
\gathered
nV_1[u,F(V[u],u)]\le\psi_1(V[u])W_1[u],\qquad\qquad\qquad\\
\frac{dW_i[\mathfrak{F}^t(u)]}{dt}\Big|_{t=0}\le f_i(V[u],W_1[u],...,W_i[u])\quad\\+\psi_{i+1}(V[u],W_1[u],...,W_i[u])W_{i+1}[u],\quad i=1,...,k-1,\\
\frac{dW_k[\mathfrak{F}^t(u)]}{dt}\Big|_{t=0}\le f_k(V[u],W_1[u],...,W_k[u]);\quad
\endgathered
$$

(2)  the right sides of  comparison system
\begin{equation}\label{*}
\gathered
\frac{d\xi_0}{dt}=\tr A\varphi(\xi_0)\xi_0+\psi_1(\xi_0)\xi_1,\qquad\qquad\qquad\\
\frac{d\xi_i}{dt}=f_i(\xi_0,...,\xi_i)+\psi_{i+1}(\xi_0,\xi_1,...,\xi_i)\xi_{i+1},\;\\
\frac{d\xi_k}{dt}=f_k(\xi_0,...,\xi_k)\qquad\qquad\qquad\qquad\qquad\;\;\,
\endgathered
\end{equation}
satisfy T. Wazewskij conditions.
\end{assumption}

Assume that comparison system \eqref{*} has a trivial solution $\xi_i=0$.
We define a measure of initial deviations  $h_0[u_0]=\max\limits_{i=\overline{0,\,k}}W_i[u]$.

   The following statement is the immediate consequence of the general stability theorem.

\begin{theorem}\label{t44} Assume that a trivial solution $\xi_i=0$, $\,i=0,...,k$ of comparison system \eqref{*} is

(1) $\xi_0$-stable in cone $\Bbb R_+^{k+1}$;

(2) asymptotically  $\xi_0$-stable in cone  $\Bbb R_+^{k+1}$.

Then the semiflow $\mathfrak{F}$ is

(1) stable in terms of the measures $(h_0[u],V[u])$;

(2)   asymptotically stable in terms of the measures $(h_0[u],V[u])$.
\end{theorem}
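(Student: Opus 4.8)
The plan is to reduce this statement to the general comparison theorem, Theorem~\ref{t31}, by verifying that the hypotheses of Assumption~3.1 are met under the present hypotheses. First I would set $W_0[u]=V[u]$ and keep the functionals $W_1,\dots,W_k$ from Assumption~4.3. The key point is that the computation carried out earlier in the section, namely
$$
\frac{dV[\mathfrak{F}^t(u)]}{dt}\Big|_{t=0}=\tr A\,\varphi(V[u])\,V[u]+nV_1[u,F(V[u],u)],
$$
together with the first inequality of Assumption~4.3(1), gives
$$
\frac{dW_0[\mathfrak{F}^t(u)]}{dt}\Big|_{t=0}\le \tr A\,\varphi(V[u])\,V[u]+\psi_1(V[u])W_1[u],
$$
which is exactly the $i=0$ line of Assumption~3.1(1) with $f_0(\xi_0)=\tr A\,\varphi(\xi_0)\xi_0$ and $\psi_0=\psi_1$. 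The remaining lines $i=1,\dots,k$ of Assumption~3.1(1) are precisely the hypotheses of Assumption~4.3(1), and the comparison system \eqref{s*} then coincides with \eqref{*}; its right-hand sides satisfy the Wazewski conditions by Assumption~4.3(2). So parts (1) and (2) of Assumption~3.1 are in force.

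Next I would check Assumption~3.1(3). Here the two measures are $h_0[u]=\max_{i=\overline{0,k}}W_i[u]$ (with $W_0=V$) and $h[u]=V[u]$. With the identity function $a(s)=b(s)=s$ (which is of Hahn class), the inequalities
$$
\max_{i=\overline{0,k}}W_i[u]\le b(h_0[u])=h_0[u],\qquad W_0[u]=V[u]=a(h[u])
$$
hold trivially. Thus all three parts of Assumption~3.1 are satisfied, and the hypothesis that \eqref{*} has a trivial solution which is ($\xi_0$-stable, asymptotically $\xi_0$-stable) in the cone $\Bbb R_+^{k+1}$ is exactly the hypothesis of Theorem~\ref{t31}.

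Finally I would invoke Theorem~\ref{t31}: under hypothesis (1) the semiflow is $(h_0,h)$-stable, i.e.\ stable in terms of the measures $(h_0[u],V[u])$, and under hypothesis (2) it is asymptotically $(h_0,h)$-stable, i.e.\ asymptotically stable in terms of $(h_0[u],V[u])$. This gives both conclusions.

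The only genuine content beyond bookkeeping is the verification that the derivative of $V[\mathfrak{F}^t(u)]$ along the orbit has the stated form with the $\tr A\,\varphi(V[u])V[u]$ term — but this was already established in the preceding discussion via the Steiner formula and the Liouville–Ostrogradskii theorem, so here it may simply be quoted. I therefore expect no real obstacle; the main thing to be careful about is matching the indexing of $f_i,\psi_i$ in Assumption~4.3 (which runs $\psi_1,\dots,\psi_k$ with $\psi_i$ attached to $W_i$) to the indexing in Assumption~3.1 (where $\psi_i$ is attached to $W_{i+1}$), i.e.\ the shift $\psi_i^{(3.1)}=\psi_{i+1}^{(4.3)}$ and $f_0^{(3.1)}(\xi_0)=\tr A\,\varphi(\xi_0)\xi_0$.
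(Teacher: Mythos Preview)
Your proposal is correct and matches the paper's approach: the paper simply declares this theorem an ``immediate consequence of the general stability theorem'' (Theorem~\ref{t31}) without spelling out the verification, and your argument provides exactly the bookkeeping (setting $W_0=V$, $a=b=\mathrm{id}$, and matching the indexing of the $f_i,\psi_i$) needed to see that Assumption~3.1 is satisfied so that Theorem~\ref{t31} applies.
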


 Assume that the trivial solution   $\xi_i=0$, $\,i=0,...,k$ of comparison system  \eqref{*} is  $u_0$-unstable. So if in (1) of  Assumption 4.2 we take the inequalities with a reversed sign then the semiflow $\mathfrak{F}$  would be unstable in terms of two measures $(h_0[u],V[u])$.

Note that in the important special case when $A =0$  the semiflow $\mathfrak{F}$, as a rule, can not be stable (for example, when it is generated by SDEs).

 In this case, for the qualitative analysis of the dynamics of the semiflow it is appropriate  to use the notion of practical stability on a finite  time interval.
  Next we present a stability theorem  for this property. In this case, there is no need to assume that $\theta_0$ is a fixed point of the semiflow.

\begin{theorem}\label{t45}  Assume that the conditions of Assumption 4.2 are fulfilled  and the following  inequality holds
$$
\xi_0(T;\lm,...,\lm)<A.
$$

Then the semiflow  $\mathfrak{F}$ is practical $(\lm,A,T)$- stable in terms of two measures $(h_0[u],V[u])$ on a finite  time interval.

If in (1) of Assumption 4.2   we take the inequalities with a reversed sign  and assume that the following  inequality holds
$$
\xi_0(T;\lm,...,\lm)\ge A,
$$
then the semiflow  $\mathfrak{F}$ is  practical$(\lm,A,T)$- unstable in terms of two measures $(h_0[u],V[u])$ on a finite  time interval.
\end{theorem}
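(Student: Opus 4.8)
The plan is to follow the scheme of Theorem~\ref{t32}, of which Theorem~\ref{t45} is the practical-stability counterpart for the concrete semiflow (just as Theorem~\ref{t44} is the counterpart of Theorem~\ref{t31}), reusing the comparison-principle argument from the proof of Theorem~\ref{t31}. First I would fix $u_0\in G$ with $h_0[u_0]<\lm$. Since $h_0[u_0]=\max\limits_{i=\overline{0,k}}W_i[u_0]$ and the role of $\xi_0$ in \eqref{*} is played by $W_0=V$, this means $W_i[u_0]<\lm$ for every $i$, i.e. the vector $\xi(0):=(W_0[u_0],\dots,W_k[u_0])$ is nonnegative and satisfies $\xi(0)\le\lm e_0$ componentwise, with $e_0=(1,\dots,1)^{\T}$.

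Recall that the derivative of $V$ along the orbit was already computed to be $\frac{dV[\mathfrak{F}^t(u)]}{dt}\big|_{t=0}=\tr A\,\varphi(V[u])V[u]+nV_1[u,F(V[u],u)]$, so Assumption~4.2(1) furnishes exactly the system of differential inequalities whose majorant is \eqref{*}. Since the right-hand sides of \eqref{*} satisfy the conditions of T.~Wazewskij, the differential-inequality theorem \cite{rush} applies and gives $W_i[\mathfrak{F}^t(u_0)]\le\xi_i(t;\xi(0))$ for all $t\in[0,T]$ (the maximal solution of \eqref{*} existing there by hypothesis, and $\mathfrak{F}$ being global on $G$). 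The quasimonotonicity built into the Wazewskij conditions yields monotone dependence of that maximal solution on its initial data, so from $\xi(0)\le\lm e_0$ we obtain $V[\mathfrak{F}^t(u_0)]=W_0[\mathfrak{F}^t(u_0)]\le\xi_0(t;\xi(0))\le\xi_0(t;\lm e_0)$ on $[0,T]$; the hypothesis $\xi_0(T;\lm e_0)<A$ then gives $h[\mathfrak{F}^t(u_0)]=V[\mathfrak{F}^t(u_0)]<A$ for all $t\in[0,T]$, i.e. practical $(\lm,A,T)$-stability.

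For the instability claim I would reverse every inequality in Assumption~4.2(1); the comparison principle then produces lower bounds $V[\mathfrak{F}^t(u_0)]\ge\chi_0(t;\xi(0))$, where $\chi$ is the minimal solution of \eqref{*}. Choosing $u_0\in G$ with each $W_i[u_0]$ slightly below $\lm$ (so that still $h_0[u_0]<\lm$) and using monotone dependence on initial data together with continuity of $\xi_0(T;\cdot)$, one gets $V[\mathfrak{F}^T(u_0)]\ge\chi_0(T;\xi(0))$, which can be brought up to (or, under the strict version, past) the value $\xi_0(T;\lm e_0)\ge A$; this violates the defining estimate of practical stability, so $\mathfrak{F}$ is practical $(\lm,A,T)$-unstable.

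The main obstacle, as always with the comparison method, is the safe passage through the comparison system: one must check that the Wazewskij conditions simultaneously secure (i)~existence of the maximal/minimal solution of \eqref{*} on the whole interval $[0,T]$, (ii)~applicability of the multivariable differential-inequality theorem with right-hand sides containing the products $\psi_{i+1}\xi_{i+1}$, and (iii)~the monotone dependence on initial data used to replace $\xi(0)$ by $\lm e_0$. A secondary subtlety is that practical stability demands $V[\mathfrak{F}^t(u_0)]<A$ at every $t\in[0,T]$, not merely at $t=T$; the stated hypothesis covers this precisely when $t\mapsto\xi_0(t;\lm e_0)$ is nondecreasing, which is the situation motivating this theorem — the case $A=0$, where the $\xi_0$-equation becomes $\dot\xi_0=\psi_1(\xi_0)\xi_1\ge0$, mirroring the monotonicity of the volume along SDE trajectories — and otherwise the inequality should be understood with $\sup\limits_{t\in[0,T]}$ in place of the terminal value.
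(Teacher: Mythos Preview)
Your proposal is correct and follows essentially the same route as the paper: the paper gives no explicit proof of Theorem~\ref{t45}, treating it (like Theorem~\ref{t44}) as an immediate consequence of the general comparison scheme, in parallel with the remark ``Similarly we can prove the following result'' preceding Theorem~\ref{t32}. Your write-up is in fact more detailed than what the paper supplies, and your closing caveat about the bound at $t=T$ versus on all of $[0,T]$ flags a genuine looseness that the paper leaves unaddressed in both Theorem~\ref{t32} and Theorem~\ref{t45}.
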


The estimates obtained in the proof of the conditions for global semiflow $\mathfrak{F}$ allow us also to formulate some sufficient conditions for the stability of the semiflow $\mathfrak{F}$ with respect to measures that are Hausdorff metric, i.e. $h_0[u]=h[u]=\|u\|_{C(S^{n-1})}$.

\begin{theorem}\label{t3} Assume that the conditions (1)--(4)  of the Assumption 4.1 are fulfilled, and there exist
$$
\hat{\Lambda}^{+}(t)=\sup\limits_{u_0\in B_r(0)}\Lambda^{+}(t,V[u_0]),\quad \hat{\Lambda}^{-}(t)=\inf\limits_{u_0\in B_r(0)}\Lambda^{+}(t,V[u_0]), \quad
$$$$
\hat{F}^+(t,\omega)=\sup\limits_{u_0\in B_r(0)}F^+(t,\omega,V[u_0]).
$$

If    the solution $\omega=0$ of comparison equation
$$
\frac{d\omega}{dt}=\alpha\hat{\Lambda}^{\pm}(t)\omega+\hat{F}^+(t,N\omega),\quad\omega(0)=\|u_0\|
$$
 is

(1) stable by Lyapunov;

(2)  asymptotically stable by Lyapunov.

Then  the semiflow $\mathfrak{F}$ is

(1) stable in terms  of measures $h_0[u]=h[u]=\|u\|_{C(S^{n-1})}$;

(2) asymptotically stable in terms  of measures $h_0[u]=h[u]=\|u\|_{C(S^{n-1})}$.
\end{theorem}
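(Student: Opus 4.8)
The plan is to reuse, almost verbatim, the a priori norm estimate already extracted in the proof of Theorem \ref{t1}, but now tracked uniformly in the initial data $u_0$ ranging over a small ball $B_r(0)$. First I would recall from the proof of Theorem \ref{t1} that, using conditions (1)--(4) of Assumption 4.1 together with the growth estimates for $\exp\{\mathcal{A}t\}$, every solution $u(t)=\mathfrak{F}^t(u_0)$ of the integral equation \eqref{41} satisfies
\begin{equation*}
\|u(t)\|_{C(S^{n-1})}\le N\Big(\exp\Big\{\al\int_0^t\Lambda^{\pm}(s,V[u_0])\,ds\Big\}\|u_0\|_{C(S^{n-1})}+\int_0^t\exp\Big\{\al\int_s^t\Lambda^{\pm}(\tau,V[u_0])\,d\tau\Big\}F^+(s,\|u(s)\|_{C(S^{n-1})},V[u_0])\,ds\Big).
\end{equation*}
Since $\hat\Lambda^{\pm}(t)$ and $\hat F^+(t,\omega)$ dominate $\Lambda^{\pm}(t,V[u_0])$ and $F^+(t,\omega,V[u_0])$ respectively for all $u_0\in B_r(0)$ (and $\hat F^+$ inherits monotonicity in $\omega$ from $F^+$), replacing $\Lambda^{\pm}$ by $\hat\Lambda^{\pm}$ and $F^+$ by $\hat F^+$ only enlarges the right-hand side, so the same integral inequality holds with the hatted data. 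Applying the comparison principle (the differential inequality theorem already cited in the paper) then yields $\|u(t)\|_{C(S^{n-1})}\le N\hat\omega(t)$, where $\hat\omega(t)$ is the maximal solution of the scalar comparison equation
\begin{equation*}
\frac{d\omega}{dt}=\al\hat\Lambda^{\pm}(t)\omega+\hat F^+(t,N\omega),\qquad\omega(0)=\|u_0\|_{C(S^{n-1})}.
\end{equation*}

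Next I would translate the stability properties of the trivial solution $\omega\equiv 0$ of this comparison equation into the corresponding properties of $\mathfrak{F}$. For ordinary Lyapunov stability: given $\ep>0$, by stability of $\omega=0$ there is $\dl_1>0$ so that $\|u_0\|_{C(S^{n-1})}<\dl_1$ forces $\hat\omega(t)<\ep/N$ for all $t\ge 0$; taking $\dl=\min\{r,\dl_1\}$ and using $\|u(t)\|_{C(S^{n-1})}\le N\hat\omega(t)$ gives $\|\mathfrak{F}^t(u_0)\|_{C(S^{n-1})}<\ep$ for all $t\ge 0$, which is exactly $(h_0,h)$-stability in terms of the measures $h_0[u]=h[u]=\|u\|_{C(S^{n-1})}$. (One should note in passing that $\dl\le r$ guarantees $u_0\in B_r(0)$, so that the hatted bounds are legitimately applicable along the whole orbit, the norm staying below $\ep$ keeping us in the region where the estimates were derived.) For asymptotic stability one argues identically for the stability part and then, for $\|u_0\|_{C(S^{n-1})}$ below the attraction radius of $\omega=0$, uses $\hat\omega(t)\to 0$ together with $0\le\|\mathfrak{F}^t(u_0)\|_{C(S^{n-1})}\le N\hat\omega(t)$ to conclude $\|\mathfrak{F}^t(u_0)\|_{C(S^{n-1})}\to 0$.

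The main obstacle, and the only point requiring care, is the uniformity in $u_0\in B_r(0)$: one must check that the suprema/infima defining $\hat\Lambda^{\pm}$ and $\hat F^+$ are finite and that the comparison argument genuinely closes, i.e. that the orbit cannot leave $B_r(0)$ before the estimate has taken effect. This is handled by the standard bootstrap: the bound $\|u(t)\|_{C(S^{n-1})}\le N\hat\omega(t)$ is valid on the maximal interval of existence, and since the semiflow is assumed global on the closed set $G$ (a hypothesis inherited through conditions (1)--(4) of Assumption 4.1 exactly as in Theorem \ref{t1}), there is no blow-up to worry about; the estimate then holds for all $t\ge 0$. The rest is the routine $\ep$--$\dl$ bookkeeping sketched above.
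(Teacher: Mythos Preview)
Your proposal is correct and is precisely the argument the paper intends: the text immediately preceding the theorem states that ``the estimates obtained in the proof of the conditions for global semiflow $\mathfrak{F}$ allow us also to formulate some sufficient conditions for the stability,'' and gives no further proof, so your plan of reusing the integral norm estimate from the proof of Theorem~\ref{t1}, majorizing $\Lambda^{\pm}$ and $F^+$ by their hatted versions uniformly over $u_0\in B_r(0)$, and then invoking the scalar comparison principle is exactly what the authors have in mind. Your attention to the bootstrap issue (ensuring $u_0\in B_r(0)$ by taking $\dl\le r$) is a detail the paper leaves implicit.
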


 Consider the stability problem  by a linear approximation for an arbitrary fixed point $u=u^*$ of the semiflow $\mathfrak{F}$, i.e. the stability of the  semiflow in terms of the  measures $h[u_0]=h[u]=\|u-u^*\|_{C(S^{n-1})}=d_{H}(u,u^*)$.

 For this purpose, it is necessary to introduce additional assumptions about the differentiability of the mapping $\,F\,:\Bbb R_+\times\mathrm{conv}\,(\Bbb R^n)\to\mathrm{conv}\, (\Bbb R^n)$ and  function $\varphi\,\,:\Bbb R_+\to\Bbb R_+$:

(a) for any $(s_0,u_0)\in\Bbb R_+\times\mathrm{conv}\,(\Bbb R^n)$ there exist the element $F_s(s_0,u_0)\in C(S^{n-1})$  and a linear continuous operator $F_u(s_0,u_0)\in\mathfrak{L}(C(S^{n-1}))$  such that for all  $s$ from a certain neighborhood of $s_0$ and $u\in\mathrm{conv}\,(\Bbb R^n)$ from a certain neighborhood (in space $\mathrm{conv}\,(\Bbb R^n)$) of point $u_0$  we have the representation
$$
F(s,u)-F(s_0,u_0)=F_s(s_0,u_0)(s-s_0)+F_u(s_0,u_0)(u-u_0)+o(\varrho),
$$
where $\varrho=\sqrt{|s-s_0|^2+\|u-u_0\|_{C(S^{n-1})}^2}$.

(b)  there exists a continuous derivative $\frac{d\varphi}{dV}$ of function $\varphi(V)$.

 Let us define the variables of a perturbed motion  $\dl u(t)=\mathfrak{F}^t(u)-u^*$, $\dl V(t)=V[\mathfrak{F}^t(u)]-V[u^*]$,
then the equation of  perturbed motion be of the form
$$
\frac{d\delta V}{dt}=\tr A\varphi(V[u^*]+\dl V)(V[u^*]+\dl V)+nV_1[u^*+\dl u,F(V[u^*]+\dl V,u^*+\dl u)].
$$

Taking into account the assumption of differentiability of the mapping $F$ and using the formula (9) from ( \cite{alex1}, p. 969), we obtain
$$
\gathered
\frac{d\delta V}{dt}=(\tr A(\varphi(V[u^*])+V[u^*]\frac{d\varphi}{dV}(V[u^*]))+nV_1[u^*,F_V(V[u^*],u^*)])\dl V\\+n(n-1)(V[u^*,...,u^*,F(V[u^*],u^*),\dl u]+V_1[u^*,F_u(V[u^*],u^*)\dl u)])+R_1.
\endgathered
$$
Here $R_1=o(\,\|\dl u\|_{C(S^{n-1})}+|\dl V|\,)$ then $ \|\dl u\|_{C(S^{n-1})}+|\dl V| \to 0$.

Denote
$$
\gm_0=\tr A(\varphi(V[u^*])+V[u^*]\frac{d\varphi}{dV}(V[u^*]))+nV_1[u^*,F_V(V[u^*],u^*)],
$$
then by Cauchy formula, we get
$$
\gathered
\dl V(t)=e^{\gamma_0 t}\dl V(0)+\int\limits_{0}^te^{\gm_0(t-s)}n(n-1)(V[u^*,...,u^*,F(V[u^*],u^*),\dl u(s)]\\ \quad\;\,+V_1[u^*,F_u(V[u^*],u^*)\dl u(s))])+R_1)\,ds
\endgathered
$$
Hence,  from  the integral representation \eqref{2111}  follows the estimate
$$
|\dl V(t)|\le e^{\gamma_0 t}|\dl V(0)|+\int\limits_{0}^te^{\gm_0(t-s)}\Delta_0\|\dl u(s)\|_{C(S^{n-1})}\,ds+o(\,\|\dl u\|_{C(S^{n-1})}+|\dl V|\,),
$$
where
$$
\gathered
\Delta_0=(n-1)\int\limits_{S^{n-1}}\big(F[u^*,...,u^*,F(V[u^*],u^*));d\omega]\\\;\;\;+\|F_u(V[u^*],u^*)\|_{\mathfrak{L}(C(S^{n-1}))}F[u^*,...,u^*;d\omega]\big).
\endgathered
$$

 Consider now the equation for  variation $\dl u$:
$$
\gathered
\frac{d\dl u}{dt}=\varphi(V[u^*]+\dl V)\mathcal{A}u-\varphi(V[u^*])\mathcal{A}u^*+F_u(V[u^*],u^*)\delta u\\+F_V(V[u^*],u^*)\dl V(t)+R_2,\qquad\qquad\qquad\qquad\quad\;\;
\endgathered
$$
where $R_2=o(\|\dl u\|_{C(S^{n-1})}+|\dl V|)$ then $ \|\dl u\|_{C(S^{n-1})}+|\dl V| \to 0$.

 It is easy to see that this equation
is equivalent to the integral equation
$$
\gathered
\dl u(t)=\exp\Big\{\mathcal{A}\int\limits_0^t\varphi(V[\mathfrak{F}^s(u)])\,ds\Big\}\dl u(0)\\ \qquad\qquad\qquad\;\;+\int\limits_0^t\exp\Big\{\mathcal{A}\int\limits_s^t\varphi(V[\mathfrak{F}^{\tau}(u)])\,d\tau\Big\}\dl\varphi(s)\,ds\mathcal{A}u^*
\endgathered
$$

$$
\gathered
+
\int\limits_0^t\exp\Big\{\mathcal{A}\int\limits_s^t\varphi(V[\mathfrak{F}^{\tau}(u)])\,d\tau\Big\}F_u(V[u^*],u^*)\dl u(s)\,ds\;\\+\int\limits_0^t\exp\Big\{\mathcal{A}\int\limits_s^t\varphi(V[\mathfrak{F}^{\tau}(u)])\,d\tau\Big\}F_V(V[u^*],u^*)\dl V(s)\,ds\\+\int\limits_0^t\exp\Big\{\mathcal{A}\int\limits_s^t\varphi(V[\mathfrak{F}^{\tau}(u)])\,d\tau\Big\}R_2\,ds,\qquad\qquad\qquad\quad
\endgathered
$$
where $\dl \varphi(s)=\varphi(V[u^*]+\dl V(s))-\varphi(V[u^*])=\frac{d\varphi}{dV}(V[u^*])\dl V+o(\dl V)$.

We get from the norm estimation  the following integral inequality
$$
\gathered
\|\dl u(t)\|_{C(S^{n-1})}\le N(\exp\Big\{\al\varphi(V[u^*])t+\int\limits_0^t|\dl \varphi(s)|\,ds\Big\}\|\dl u(0)\|_{C(S^{n-1})}\\+\|A\|\,\|u^*\|_{C(S^{n-1})}\int\limits_0^t\exp\Big\{\al\varphi(V[u^*])(t-s)+\int\limits_s^t|\dl \varphi(\tau)|\,d\tau\Big\}|\delta \varphi(s)|\,ds\\+
\int\limits_0^t\exp\Big\{\al\varphi(V[u^*])(t-s)+\int\limits_s^t|\dl \varphi(\tau)|\,d\tau\Big\}\|F_u(V[u^*],u^*)\|_{\mathfrak{L}(C(S^{n-1}))}\|\dl u(s)\|_{C(S^{n-1})}\,ds\\+\int\limits_0^t\exp\Big\{\alpha\varphi(V[u^*])(t-s)+\int\limits_s^t|\dl \varphi(\tau)|\,d\tau\Big\}\|F_V(V[u^*],u^*)\|_{C(S^{n-1})}|\dl V(s)|\,ds\\+
\int\limits_0^t\exp\Big\{\alpha\varphi(V[u^*])(t-s)+\int\limits_s^t|\dl \varphi(\tau)|\,d\tau\Big\}\|R_2\|_{C(S^{n-1})}\,ds).
\endgathered
$$

By using the comparison theorem, it is easy to show that
$$
\|\dl u(t)\|_{C(S^{n-1})}\le N\omega_1(t),\quad |\dl V(t)|\le\omega_2(t),\quad t\ge 0,
$$
where $\omega_i(t)$, $i=1,2$ is a solution of IVP for a system of differential equations
$$
\gathered
\frac{d\omega_1}{dt}=(\al\varphi(V[u^*])+N\|F_u(V[u^*],u^*)\|_{\mathfrak{L}(C(S^{n-1}))})\omega_1\\
+\Big(\|A\|\Big|\frac{d\varphi}{dV}(V[u^*])\Big|+\|F_V(V[u^*],u^*)\|_{C(S^{n-1})}\Big)\omega_2\\+
\|A\|N\Big|\frac{d\varphi}{dV}(V[u^*])\Big|\omega_1\omega_2+o(\omega),\quad\omega_1(0)=\|\dl u(0)\|_{C(S^{n-1})},\\
\frac{d\omega_2}{dt}=N\Delta_0\omega_1+\gm_0\omega_2+o(\omega),\quad\omega_2(0)=|\delta V(0)|.
\endgathered
$$

The above considerations allow us to prove the following statement.

\begin{theorem}\label{t47} Assume that the mapping $F\,:\Bbb R_+\times \mathrm{conv}\,(\Bbb R^n)\to\mathrm{conv}\,(\Bbb R^n)$ is
continuously differentiable in the neighborhood of $(V[u^*],u^*)\in\Bbb R_+\times \mathrm{conv}\,(\Bbb R^n)$, and a function $\varphi(V)$ is continuously differentiable in the neighborhood of  the point $V=V[u^*]$ and the following inequalities hold
$$
\gathered
\gm_0<0,\\ (\al\varphi(V[u^*])+N\|F_u(V[u^*],u^*)\|_{\mathfrak{L}(C(S^{n-1}))})\gm_0
\\+N\Delta_0\Big(\|A\|\Big|\frac{d\varphi}{dV}(V[u^*])\Big|+\|F_V(V[u^*],u^*)\|_{C(S^{n-1})}\Big)>0.
\endgathered
$$
Here
$$
\gathered
\gm_0=\tr A(\varphi(V[u^*]+V[u^*]\frac{d\varphi}{dV}(V[u^*]))+nV_1[u^*,F_V(V[u^*],u^*)].
\endgathered
$$

Then the fixed point $u=u^*$ of the semiflow $\mathfrak{F}$  is asymptotically  stable by Lyapunov.
\end{theorem}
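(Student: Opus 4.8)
The plan is to reduce the assertion, via the comparison principle, to the asymptotic stability of the zero solution of the two-dimensional comparison system for $(\omega_1,\omega_2)$ written just above the theorem, and then to settle that finite-dimensional problem by Lyapunov's theorem on stability by the first approximation.

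First I would invoke what the perturbed-motion analysis preceding the statement already supplies. Under the differentiability hypotheses (a), (b), the deviations $\dl u(t)=\mathfrak{F}^t(u)-u^*$ and $\dl V(t)=V[\mathfrak{F}^t(u)]-V[u^*]$ satisfy the integral inequalities derived above, and the differential inequality (comparison) theorem \cite{rush} yields $\|\dl u(t)\|_{C(S^{n-1})}\le N\omega_1(t)$ and $|\dl V(t)|\le\omega_2(t)$ for all $t\ge 0$, where $(\omega_1,\omega_2)$ is the solution of the comparison IVP with $\omega_1(0)=\|\dl u(0)\|_{C(S^{n-1})}$, $\omega_2(0)=|\dl V(0)|$. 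Hence it suffices to show that the origin of the comparison system is asymptotically stable in the Lyapunov sense: sufficiently small $(\omega_1(0),\omega_2(0))$ then force $\omega_i(t)\to 0$ while staying small, and $d_H(\mathfrak{F}^t(u),u^*)=\|\dl u(t)\|_{C(S^{n-1})}\le N\omega_1(t)$ inherits both properties.

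Next I would analyze the comparison system, which has the form $\dot\omega=M\omega+R(\omega)$, where $M$ collects the linear parts of the right-hand sides,
\[
M=\begin{pmatrix}
\al\varphi(V[u^*])+N\|F_u(V[u^*],u^*)\|_{\mathfrak{L}(C(S^{n-1}))} & \|A\|\,\big|\tfrac{d\varphi}{dV}(V[u^*])\big|+\|F_V(V[u^*],u^*)\|_{C(S^{n-1})}\\
N\Delta_0 & \gm_0
\end{pmatrix},
\]
and $R(\omega)=o(\|\omega\|)$ as $\omega\to 0$ (the cross term $\|A\|N\,|\tfrac{d\varphi}{dV}(V[u^*])|\,\omega_1\omega_2$ is quadratic, and the remaining pieces come from the $o(\varrho)$ expansion of $F$ and the $o(\dl V)$ expansion of $\varphi$). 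By the classical Lyapunov first-approximation theorem \cite{lyap} the origin is asymptotically stable provided $M$ is a Hurwitz matrix, i.e. $\tr M<0$ and $\det M>0$. To obtain this from the hypotheses I would first note that the off-diagonal entries of $M$ are nonnegative: the $(1,2)$-entry visibly so, and $N\Delta_0\ge 0$ because $\Delta_0$ is assembled from the mixed superficial functions $F[u^*,\dots,u^*,F(V[u^*],u^*);d\omega]$ and $F[u^*,\dots,u^*;d\omega]$, which are nonnegative by the fact recalled in Section~2 (this uses that $F(V[u^*],u^*)$ is itself a convex body), and from the nonnegative factor $\|F_u(V[u^*],u^*)\|$. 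The second displayed hypothesis then says precisely $\det M=(\al\varphi(V[u^*])+N\|F_u\|)\gm_0-N\Delta_0(\|A\|\,|\tfrac{d\varphi}{dV}(V[u^*])|+\|F_V\|)>0$; since the subtracted term is $\ge 0$, this gives $(\al\varphi(V[u^*])+N\|F_u\|)\gm_0>0$, and with $\gm_0<0$ it forces the $(1,1)$-entry of $M$ to be negative, whence $\tr M<0$ as well. Thus $M$ is Hurwitz, the first-approximation theorem applies, and feeding the conclusion back through the comparison inequalities of the previous paragraph shows that the fixed point $u^*$ is asymptotically stable by Lyapunov.

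The step I expect to be the main obstacle is the honest bookkeeping of the remainder terms: one must check that, after the two nested comparison estimates — in particular the factor $N$ entering $\|\dl u\|\le N\omega_1$ — the nonlinearity $R(\omega)$ of the resulting comparison system is genuinely $o(\|\omega\|)$ uniformly near the origin, so that Lyapunov's first method is legitimately applicable; and, relatedly, that for small initial data the comparison solution does not escape in finite time, which is automatic once the origin is known to be asymptotically stable. A smaller point to pin down is the nonnegativity of $\Delta_0$, which rests on $F(V[u^*],u^*)$ being a genuine element of $\mathrm{conv}\,(\Bbb R^n)$ so that the mixed surface area measures occurring in it are nonnegative.
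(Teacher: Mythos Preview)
Your approach coincides with the paper's: both reduce the question to asymptotic stability of the origin for the planar comparison system in $(\omega_1,\omega_2)$ and then transfer that back to $d_H(\mathfrak{F}^t(u),u^*)$ via the inequalities $\|\dl u(t)\|\le N\omega_1(t)$, $|\dl V(t)|\le\omega_2(t)$. The paper's proof simply \emph{asserts} that the hypotheses make the comparison origin asymptotically stable and then spends its effort on an explicit $\varepsilon$--$\dl$ argument; you do the reverse, supplying the Hurwitz verification that the paper omits but leaving the final transfer informal.

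Two points deserve attention. First, a sign: the second displayed hypothesis in the statement carries a $+$, not the $-$ you wrote. As printed it reads $M_{11}\gm_0+M_{12}M_{21}>0$, whereas $\det M=M_{11}\gm_0-M_{12}M_{21}$; your Hurwitz argument requires the latter. Either you have silently repaired a typo in the statement or you have misread it, but in any case you should not say the hypothesis ``says precisely'' $\det M>0$. Second, the paper's $\varepsilon$--$\dl$ step is not a formality: Lyapunov stability of $u^*$ is phrased in the single measure $\|u-u^*\|_{C(S^{n-1})}$, while the comparison system has two initial data $\omega_1(0)=\|\dl u(0)\|$ and $\omega_2(0)=|\dl V(0)|$. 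The paper explicitly invokes the local Lipschitz property of the volume functional, $|V[u]-V[u^*]|\le L\|u-u^*\|_{C(S^{n-1})}$, to bound $\omega_2(0)$ in terms of $\omega_1(0)$ before choosing $\dl$; you should make this link as well.
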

\textbf{Proof.} From the conditions  of the theorem it follows that the equilibrium state $\omega_1=\omega_2=0$  of comparison system is  asymptotically  stable. This means that for any $\varepsilon>0$ there exists a positive number $\dl_1=\dl_1(\varepsilon)$
such that the inequalities $\omega_1(0)<\dl_1$, $\omega_2(0)<\dl_1$ imply the estimate $\omega_1(t)<\varepsilon$ for all  $t\ge 0$. Let $L$ be a Lipschitz constant for the functional $V[u]$ relative to the ball $K_1(0)\subset\mathrm{conv}\,(\Bbb R^n)$ and choose $\dl_0=\min\{1,\dl_1(1/N)/L,\dl_1(1/N)/N\}$.

 Then the inequality $\|u-u^*\|_{C(S^{n-1})}<\dl_0$ implies the estimate $|V[u]-V[u^*]|\le L\|u-u^*\|_{C(S^{n-1})}<\dl_1$, therefore $\|\mathfrak{F}^t(u)-u^*\|_{C(S^{n-1})}<N\omega_1(t)<1$ for all $t\ge 0$. Given $\,\varepsilon>0$, we choose $\dl(\varepsilon)=\min\{\dl_0,\frac{\dl_1(\varepsilon/N)}{L},\dl_1(\varepsilon/N)\}$. Then $\omega_1(0)=\|\dl u(0)\|_{C(S^{n-1})}<\dl_1(\varepsilon/N)$, $\omega_2(0)=|\dl V(0)|<L\|u-u^*\|_{C(S^{n-1})}<\dl_1(\varepsilon/N)$ and, as a consequence, $\|\mathfrak{F}^t(u)-u^*\|_{C(S^{n-1})}<N\omega_1(t)<\varepsilon$ for all $t\ge 0$, which proves the stability of the fixed point $u=u^*$  of the semiflow  $\mathfrak{F}$. Asymptotic stability is proved similarly. The  proof  is  complete.

\section{Examples}
Consider an examples of application of  obtained results to the study of  specific semiflows in  the space of convex compacts  $\mathrm{conv}\,(\Bbb R^n)$.

{\it Example 5.1.} Consider a semiflow whose parameters are of the form $A=-I$, $F(V,u)=\psi(V)K$, $K$ is a unit ball in space $\Bbb R^n$, $\varphi\in C^1(\Bbb R_+;\Bbb R_+)$, $\psi\in C^1(\Bbb R_+;\Bbb R_+)$. The fixed points $u=u^*\in\mathrm{conv}\,(\Bbb R^n)$  of this semiflow  are defined as the solutions of the equation
$$
-\varphi(V[u^*])u^*+\psi(V[u^*])K=0.
$$
Let $\lambda_0>0$ be  a root of the equation
$$
\lambda_0=\Big[\frac{\psi(\lambda_0)}{\varphi(\lambda_0)}\Big]^n\frac{\pi^{n/2}}{\Gamma(1+n/2)},
$$
where $\Gamma(x)$  is a  gamma function of Euler, then $u^*=\frac{\psi(\lambda_0)}{\varphi(\lambda_0)}K$.

To study the stability of a fixed point $u=u^*$ of the semiflow $\mathfrak{F}$ in terms of Hausdorff measure ($h_0[u]=h[u]=d_H(u,u^*)$)  we  shall apply the Theorem  \ref{t47}. So, we have
$$
\gathered
\gamma_0=-n(\varphi(\lambda_0)+\lambda_0\varphi^{\prime}(\lambda_0))+nV_1\Big[\frac{\psi(\lambda_0)}{\varphi(\lambda_0)}K,\psi^{\prime}(\lambda_0)K\Big]
\;\;\\=-n\Big[\varphi(\lambda_0)+\lambda_0\psi(\lambda_0)\frac{d}{d\lambda}\Big[\frac{\varphi(\lambda)}{\psi(\lambda)}\Big]_{\lambda=\lambda_0}\Big].\qquad\qquad\;
\endgathered
$$
Therefore, it is obvious that the conditions of Theorem \ref{t47}  are reduced to the single inequality
$$
\frac{d}{d\lambda}\Big[\ln\Big(\lambda\frac{\varphi(\lambda)}{\psi(\lambda)}\Big)\Big]_{\lambda=\lambda_0}>0.
$$
{\it Example 5.2.} Let $n=2$, $A=-I$, $F(s,u)=\psi(s)Bu$, $B\in\mathfrak{L}(\Bbb R^2)$, $B^2=0$.

To investigate the stability of the semiflow $\mathfrak{F}$ in terms of two measures $(h_0[u],S[u])$, we introduce two auxiliary functionals $W_0[u]=S[u]$, $W_1[u]=S[u,Bu]$, and calculate the total derivatives of these functionals along the  orbit of semiflow $\mathfrak{F}$:
\begin{equation}\label{e51}
\gathered
\frac{dW_0[\mathfrak{F}^t(u)]}{dt}\Big|_{t=0}=-2\varphi(W_0[u])W_0[u]+2\psi(W_0[u])W_1[u],\\
\frac{dW_1[\mathfrak{F}^t(u)]}{dt}\Big|_{t=0}=-2\varphi(W_0[u])W_1[u].\qquad\qquad\qquad\qquad
\endgathered
\end{equation}

The comparison system in this case is of  the form
\begin{equation}\label{e52}
\gathered
\frac{d\xi_0}{dt}=-2\varphi(\xi_0)\xi_0+2\psi(\xi_0)\xi_1,\;\;\;\\
\frac{d\xi_1}{dt}=-2\varphi(\xi_0)\xi_1.\qquad\qquad\qquad
\endgathered
\end{equation}

It should be noted that since the relations \eqref{e51} for total derivatives of the functionals $ W_0[u]$ and $W_1[u]$ along the  orbit of semiflow $\mathfrak{F}$ are of the form of equalities,   there is no need to require  for comparison system \eqref{e52} the fulfillment of the  Wazewskij condition.

 We shall next study   the $\xi_0$ -stability of solution $\xi_0=\xi_1=0$ for system \eqref{e52}  using the Lyapunov function
$$
V(\xi_0,\xi_1)=\frac{1}{2}(\xi_0^2+\beta\xi_1^2).
$$
The total derivative of this function along the solutions of the comparison  system is of the form
$$
\frac{dV}{dt}\Big|_{\eqref{e52}}=-2\varphi(\xi_0)\xi_0^2+2\psi(\xi_0)\xi_1\xi_0-2\beta\varphi(\xi_0)\xi_1^2.
$$
 The sufficient conditions for negative definiteness of this derivative are of the form
$$
0<\inf\limits_{s>0}\varphi(s)\le\sup\limits_{s>0}\varphi(s)<\infty,\quad \sup\limits_{s>0}\frac{\psi(s)}{\varphi(s)}<\infty.
$$

 In this case, by  Lyapunov theorem the solution $\xi_0=\xi_1=0$ of  system \eqref{e52} is asymptotically stable.

If  the function $\varphi(s)$ is nonincreasing,  then the right sides of  system  \eqref{e52} satisfy   Wazewskij   condition and  stability  conditions of solution  $\xi_0=\xi_1=0$  can be obtained using the    Martynyuk–-Obolenskij  criterion \cite{mart-ob}--\cite{ob-1} for autonomous  Wazewskij systems.   This criterion allows us to state that if the inequalities  $\varphi(s)>0$, $\psi(s)>0$ are fulfilled for  $s>0$ and $\varphi(s)$ is nonincreasing,  then the solution  $\xi_0=\xi_1=0$ of system \eqref{e52}  is asymptotically stable.

Theorem \ref{t31}  implies that  if  the solution $\xi_0=\xi_1=0$  of comparison system  is asymptotically stable, then  the semiflow $\mathfrak{F}$  is stable in terms of measures $(h_0[u],S[u])$, $h_0[u]=\max\{S[u],S[u,Bu]\}$.

The geometric meaning of the  measure of initial deviations $h_0[u]$ is that it restricts not only the area of convex compact $u\in\mathrm{conv}\,(\Bbb R^2)$, but also the value of $S[u, Bu]$, which is a   projection of this compact on the line $\mathcal R(B)$ along a line $\ker(B)$.

{\it Example 5.3.} Let $n=2$, $A=-I$, $F(s,u)=\psi(s)Bu$, $B\in\mathfrak{L}(\Bbb R^2)$, $B^k=I$, $k$ is a natural number.

To investigate the stability of the semiflow $\mathfrak{F}$ in terms of two measures $(h_0[u],S[u])$, we introduce  auxiliary functionals $W_i[u]=S[u,B^iu]$. The total derivatives of these functionals along the orbit of semiflow $\mathfrak{F}$ are of the form

\begin{equation}\label{e53}
\gathered
\frac{dW_0[\mathfrak{F}^t(u)]}{dt}\Big|_{t=0}=-2\varphi(W_0[u])W_0[u]+2\psi(W_0[u])W_1[u],\qquad\qquad\quad\;\;\\
\quad\frac{dW_i[\mathfrak{F}^t(u)]}{dt}\Big|_{t=0}=-2\varphi(W_0[u])W_0[u]+2\psi(W_0[u])(W_{i-1}[u]+W_{i+1}[u]),\\
\frac{dW_{k-1}[\mathfrak{F}^t(u)]}{dt}\Big|_{t=0}=-2\varphi(W_0[u])W_{k-1}[u]+2\psi(W_0[u])(W_{k-2}[u]+W_{0}[u]).
\endgathered
\end{equation}
The comparison system is of the form
\begin{eqnarray}\label{e54}
 \frac{d\xi_0}{dt} &=& -2\varphi(\xi_0)\xi_0+2\psi(\xi_0)\xi_1,\nonumber \\
  \frac{d\xi_i}{dt} &=& -2\varphi(\xi_0)\xi_i+\psi(\xi_0)(\xi_{i-1}+\xi_{i+1}),\quad i=1,2,...,k-2, \\
  \frac{d\xi_{k-1}}{dt} &=& -2\varphi(\xi_0)\xi_{k-1}+\psi(\xi_0)(\xi_{k-2}+\xi_{0}). \nonumber
\end{eqnarray}

It should be noted that since the relations \eqref{e53}  are  equalities,   there is no need to require  for comparison system \eqref{e54} the fulfillment of the  Wazewskij condition.

Theorem \ref{t41} implies that if solution $\xi=0$  of comparison system  \eqref{e54} is $\xi_0$-stable (asymptotically stable), then  the semiflow is stable (asymptotically stable) in terms of two measures   $(h_0[u],S[u])$, $h_0[u]=\max\limits_{i=\overline{0,k-1}}S[u,B^iu]$.

Consider the special case for $k=2$. Choose for comparison system
\begin{equation}\label{e55}
\gathered
\frac{d\xi_0}{dt}=-2\varphi(\xi_0)\xi_0+2\psi(\xi_0)\xi_1,\\
\frac{d\xi_1}{dt}=2\psi(\xi_0)\xi_1-2\varphi(\xi_0)\xi_1\;\;\;\;
\endgathered
\end{equation}
 the Lyapunov  function  $V(\xi_0,\xi_1)=\frac{1}{2}(\xi_0^2+\xi_1^2)$. Then the total derivative of this function along the solutions of system \eqref{e55} is of the form
$$
\frac{dV}{dt}\Big|_{\eqref{e55}}=2(-\varphi(\xi_0)\xi_0^2+2\psi(\xi_0)\xi_0\xi_1-\varphi(\xi_0)\xi_1^2).
$$
Sufficient conditions for negative definiteness of the total derivative are of the form
$$
0<\inf\limits_{s>0}\varphi(s)\le\sup\limits_{s>0}\varphi(s)<\infty,\quad \sup\limits_{s>0}\frac{\psi(s)}{\varphi(s)}<1.
$$
If  the function $\varphi(s)$ is nonincreasing    and function   $\psi(s)$ is nondecreasing, then the comparison system  \eqref{e55} satisfies  Wazewskij condition  and   we can applied the   Martynyuk–-Obolenskij criterion  for stability investigation.  Then the conditions
$$
\gathered
(\forall s>0)\quad (\varphi(s)>0,\quad \psi(s)>0),\\
(\exists s_0>0)\,\quad (\frac{\psi(s_0)}{\varphi(s_0)}<1)
\endgathered
$$
guarantees the asymptotic stability of solutions $\xi_0=\xi_1=0$ of comparison system  \eqref{e55}.

If $\varphi(s)=1$ $\,\psi(s)=1/2$, then  we can  integrate the comparison equation and so we get a formula for   area
$$
S[\mathfrak{F}^t(u)]=\frac{1}{2}e^{-t}(S[u]+S[Bu,u])+\frac{1}{2}e^{-3t}(S[u]-S[u,Bu]).
$$
It is clear that in this case the semiflow $\mathfrak{F}$ is asymptotically stable in terms of two measures  $(h_0[u],S[u])$, $h_0[u]=\max\{S[u],S[u,Bu]\}$.

Consider the case then $k=3$.  The comparison system is of the form
\begin{equation}\label{e56}
\gathered
\frac{d\xi_0}{dt}=-2\varphi(\xi_0)\xi_0+2\psi(\xi_0)\xi_1,\qquad\quad\\
\frac{d\xi_1}{dt}=-2\varphi(\xi_0)\xi_1+2\psi(\xi_0)(\xi_0+\xi_2),\,\\
\frac{d\xi_2}{dt}=-2\varphi(\xi_0)+2\psi(\xi_0)(\xi_1+\xi_0).\quad
\endgathered
\end{equation}
Consider the auxiliary function $V(\xi_0,\xi_1,\xi_2)=\frac{1}{2}(\xi_0^2+\xi_1^2+\xi_2^2)$. The total derivative of this function along the solutions of system \eqref{e56} is of the form
$$
\frac{dV}{dt}\Big|_{\eqref{e56}}=-2\varphi(\xi_0)(\xi_0^2+\xi_1^2+\xi_2^2)+\psi(\xi_0)(3\xi_0\xi_1+2\xi_1\xi_2+\xi_0\xi_2).
$$
The total derivative $\frac{dV}{dt}\Big|_{\eqref{e56}}$ is negative definite if
$$
0<\inf\limits_{s>0}\varphi(s)\le\sup\limits_{s>0}\varphi(s)<\infty,\quad \sup\limits_{s>0}\frac{\psi(s)}{\varphi(s)}<\lambda^*,
$$
where $\lambda^*$ is a least positive root of the cubic equation $3\lambda^3+14\lambda^2-16=0$.

If  the function $\varphi(s)$ is nonincreasing    and  the function   $\psi(s)$ is nondecreasing, then the comparison system  \eqref{e56} satisfies  Wazewskij condition  and   we can applied the   Martynyuk–-Obolenskij criterion \cite{mart-ob}  for stability investigation.  Then the conditions of asymptotic stability for solution  $\xi_0=\xi_1=0$  of comparison system  \eqref{e56} are of the form
 $$
\gathered
(\forall s>0)\quad (\varphi(s)>0,\quad \psi(s)>0),\\
(\exists s_0>0)\,\quad (\frac{\psi(s_0)}{\varphi(s_0)}<1).
\endgathered
$$

 It should be noted also that if $\varphi\in C^1$ and $\psi\in C^1$, then  by applying  the Routh--Hurwitz  conditions  we get the inequality
  $\frac{\psi(0)}{\varphi(0)}<1$. However, in this case, we can only guarantee a local stability of solutions $\xi_0=\xi_1=0$ of comparison system, in contrast to the conditions obtained using the Lyapunov function that guarantee  a global stability.
 We can say that  the stronger restrictions on the initial conditions than on the current deviations are significant and  related not only with the research method, but  with the essence of the problem too. To show this let us consider the following case.

Consider the special case $k=4$, $B=\mathcal{J}=\begin{pmatrix}
                                        0 & -1 \\
                                        1& 0 \\
                                      \end{pmatrix}$.  The comparison system  is of the form
\begin{equation}\label{e55*}
\gathered
\frac{d\xi_0}{dt}=-2\varphi(\xi_0)\xi_0+2\psi(\xi_0)\xi_1,\;\;\;\;\;\\
\frac{d\xi_1}{dt}=2\psi(\xi_0)\xi_1+\varphi(\xi_0)(\xi_0+\xi_2),\\
\frac{d\xi_2}{dt}=2\psi(\xi_0)\xi_2+\varphi(\xi_0)(\xi_1+\xi_3),\\
\frac{d\xi_3}{dt}=2\psi(\xi_0)\xi_3+\varphi(\xi_0)(\xi_0+\xi_2).
\endgathered
\end{equation}
If $\varphi(s)=1$ $\,\psi(s)=1/2$  then the comparison system  \eqref{e55*} is integrated and so we can obtain a formula for area
$$
\gathered
S[\mathfrak{F}^t(u)]=\frac{1}{8}e^{-t}(2S[u]+3S[Bu,u]+2S[B^2u,u]+S[B^3u,u])\\
\qquad\qquad+\frac{1}{8}e^{-3t}(2S[u]-3S[Bu,u]+2S[B^2u,u]-S[B^3u,u])\\
\qquad\qquad\qquad\,+\frac{1}{2}e^{-2t}(S[u]-S[B^2u,u])+\frac{1}{4}e^{-2t}t(S[Bu,u]-S[B^3u,u]).
\endgathered
$$
It is clear that the semiflow $\mathfrak{F}$  is asymptotically stable in terms of measures $(h_0[u],S[u])$, $h_0[u]=\max\limits_{i=0,1,2,3}[u,B^iu]$.
\newline Next  we show that the considered semiflow is unstable in terms of measures $(S[u],S[u])$.

 Consider the sequence of convex compacts
$$
u_N=\{(x,y)\in\Bbb R^2\,\quad|\,\,|x|\le\frac{N}{2},\,\,y=0\}\in\mathrm{conv}\,(\Bbb R^2),
$$
then
$$
Bu_N=\{(x,y)\in\Bbb R^2\,\quad|\,\,|y|\le\frac{N}{2},\,\,x=0\}\in\mathrm{conv}\,(\Bbb R^2).
$$
As a result of centrally symmetric of sets $u_N$  we obtain
$$
B^2u_N=u_N,\,B^3u_N=Bu_N,
$$
It is obvious that $S[u_N]=0$, $S[u_N,Bu_N]=N^2/2$, $S[u_N,B^2u_N]=0$, $S[u_N,B^3u_N]=N^2/2$.  Therefore
$$
S[\mathfrak{F}^t(u_N)]=\frac{1}{4}(e^{-t}-e^{-3t})N^2.
$$
For a fixed $t> 0$  the area $S[\mathfrak{F}^t(u_N)]\to\infty$ for $N\to\infty$,  and this proves that the semiflow  $\mathfrak{F}$ is unstable in terms of measures  $(S[u],S[u])$.

%

 We note that the need to consider a various measures for initial and current deviations is known and is also typical for systems of partial differential equations, and is associated with the existence of non-equivalent norms in infinite-dimensional space
 \cite{sirazet}--\cite{knopswilkes}.

{\it Example 5.4.} Consider  a set differential equations
\begin{equation}\label{e57}
D_Hu(t)=Bu(t),
\end{equation}
where $u(t)\in\mathrm{conv}\,(\Bbb R^2)$, $B\in\mathfrak{L}(\Bbb R^2)$, $\det B<0$, $\tr B\geq0$.

Since $\diam\mathfrak{F}^t(u)$ is a nondecreasing as a function of $t>0$, then there is no sense to consider the stability or  asymptotic stability of the semiflow $ \mathfrak{F}$, generated by the equation \eqref{e57}.

 It is expedient to consider the practical stability on a finite interval, or $(\lambda,A,T)$--stability in terms of two measures $(h_0,h)$, $\,h_0[u]=\max\{S[u],S[u,Bu]\}$.

Consider the functionals $W_0[u]=S[u]$, $W_1[u]=S[u,Bu]$. Then
$$
\gathered
\frac{dW_0[\mathfrak{F}^t(u)]}{dt}\Big|_{t=0}=2W_1[u],\qquad\qquad\qquad\qquad\\
\frac{dW_1[\mathfrak{F}^t(u)]}{dt}\Big|_{t=0}=|\det B|W_0[u]+S[u,B^2u].
\endgathered
$$
By the theorem of Cayley--Hamilton   we get $B^2=(\tr B)B-(\det B)I$, therefore
$$
S[u,B^2u]=S[u,((\tr B)B-(\det B)I)u]\le\tr B W_1[u]+|\det B|W_0[u].
$$
 So the comparison system is of the form
\begin{equation}\label{e58}
\gathered
\frac{d\xi_0}{dt}=2\xi_1,\qquad\qquad\qquad\quad\\
\frac{d\xi_1}{dt}=2|\det B|\xi_0+\tr B\xi_1.
\endgathered
\end{equation}
The right-hand sides of the system \eqref{e58} satisfy the conditions of Wazewskij. By integrating of this comparison system we obtain the estimate
$$
\gathered
S[\mathfrak{F}^t(u)]\le\frac{1}{\sqrt{\tr^2 B+16|\det B|}}((2S[u,Bu]-\mu_-S[u])e^{\mu_+t}\;\;\\+(\mu_+S[u]-2S[u,Bu])e^{\mu_-t}),\quad t\ge 0,\qquad
\endgathered
$$
where
$$
\mu_{\pm}=\frac{4|\det B|\pm\sqrt{\tr^2 B+16|\det B|}}{2}.
$$
 The conditions of  practical $(\lm,A,T)$--stability are of the form
$$
2+(2-\mu_-)e^{\mu_+T}<\frac{A\sqrt{\tr^2 B+16|\det B|}}{\lambda}.
$$

{\it Example 5.5.}  Let us establish the conditions under which the semiflow $\mathfrak{F}$, determined by the parameters $n=2$, $F(s,u)=\psi(s)K$ is unstable in terms of the measures $(S[u],S[u])$.

 For  functional $W_0[u]=S[u]$ the following equality holds
$$
\frac{dW_0[\mathfrak{F}^t(u)]}{dt}\Big|_{t=0}=\tr A\varphi(W_0[u])W_0[u]+2\psi(W_0[u])S[u,K].
$$
 We get from   Brunn--Minkowskij inequality $S[u,K]\ge\sqrt{\pi}\sqrt{S[u]}$  the estimate
$$
\frac{dW_0[\mathfrak{F}^t(u)]}{dt}\Big|_{t=0}\ge\tr A\varphi(W_0[u])W_0[u]+2\sqrt{\pi}\psi(W_0[u])\sqrt{W_0[u]}.
$$
The  equation of comparison is of the form
$$
\frac{d\xi_0}{dt}=\tr A\xi_0\varphi(\xi_0)+2\sqrt{\pi}\psi(\xi_0)\sqrt{\xi_0}.
$$
 We obtain from  Martynyuk–-Obolenskij criterion the conditions of instability of the solution $\xi_0=0$ for equation of comparison
$$
\lim\inf\limits_{s\to 0+}\sqrt{\frac{\pi}{s}}\frac{\psi(s)}{\varphi(s)}>-\frac{\tr A}{2}.
$$
This inequality guarantees the instability  in terms of  measures $(S[u],S[u])$ of considered semiflow.

\section{Conclusion}
The obtained results significantly extend  a region of applicability of comparison method in the stability theory and generalize some of the results of  general theory for set differential equations. In particular,  these results allow to overcome known difficulties associated with the notion of asymptotic stability of solutions for these classes of equations \cite{lak}.
On the other hand, the considered semiflows are  natural generalization for families of sets of attainability for  linear control systems. The use of classic geometric inequalities  and results of convex geometry ascending to works  of G. Minkowskij and A.D. Aleksandrov, allows us to get the conditions of stability and practical stability  for semiflows in terms of  measures having a specific geometric meaning. For further study it is of interest to dissemination the obtained results to other classes of semiflows in  space $\mathrm{conv}\,(\Bbb R^n)$.

\end{document}